\documentclass[11pt]{article}

\usepackage[T1]{fontenc}
\usepackage[a4paper,margin=1in]{geometry}
\usepackage{microtype}
\usepackage{amsmath,amssymb,amsthm}
\usepackage{mathtools}
\usepackage{array}
\usepackage{aliascnt}
\usepackage[hidelinks]{hyperref}
\usepackage[capitalise,noabbrev]{cleveref}
\usepackage{authblk}

\hypersetup{
  pdftitle={Moment obstructions and continuum-to-discrete bounds for checkerboard no-three-in-line sets},
  pdfauthor={Jujhar Aujla, Thomas Prellberg, Nirvair Sandhu},
  pdfsubject={Checkerboard no-three-in-line sets, line-sum moments, and fractional packing bounds},
  pdfkeywords={no-three-in-line problem, checkerboard grid, discrete tomography, second moments, linear programming}
}
\allowdisplaybreaks

\newcommand{\eps}{\varepsilon}
\newcommand{\ZZ}{\mathbb{Z}}
\newcommand{\G}{G}
\newcommand{\C}{C}
\newcommand{\Dmono}{D_{\mathrm{mono}}}
\newcommand{\Mfour}{M_4}
\newcommand{\Lmono}{L_{\mathrm{mono}}}

\theoremstyle{plain}
\newtheorem{theorem}{Theorem}[section]

\newaliascnt{proposition}{theorem}
\newtheorem{proposition}[proposition]{Proposition}
\aliascntresetthe{proposition}
\crefname{proposition}{proposition}{propositions}
\Crefname{proposition}{Proposition}{Propositions}

\newaliascnt{lemma}{theorem}
\newtheorem{lemma}[lemma]{Lemma}
\aliascntresetthe{lemma}
\crefname{lemma}{lemma}{lemmas}
\Crefname{lemma}{Lemma}{Lemmas}

\newaliascnt{corollary}{theorem}
\newtheorem{corollary}[corollary]{Corollary}
\aliascntresetthe{corollary}
\crefname{corollary}{corollary}{corollaries}
\Crefname{corollary}{Corollary}{Corollaries}

\theoremstyle{definition}
\newaliascnt{definition}{theorem}
\newtheorem{definition}[definition]{Definition}
\aliascntresetthe{definition}
\crefname{definition}{definition}{definitions}
\Crefname{definition}{Definition}{Definitions}

\newaliascnt{example}{theorem}
\newtheorem{example}[example]{Example}
\aliascntresetthe{example}
\crefname{example}{example}{examples}
\Crefname{example}{Example}{Examples}

\theoremstyle{remark}
\newaliascnt{remark}{theorem}
\newtheorem{remark}[remark]{Remark}
\aliascntresetthe{remark}
\crefname{remark}{remark}{remarks}
\Crefname{remark}{Remark}{Remarks}

\title{Moment Obstructions and Continuum-to-Discrete Bounds\\
for Checkerboard No-Three-in-Line Sets}

\author[1]{Jujhar Aujla}
\author[2]{Thomas Prellberg}
\author[3]{Nirvair Sandhu}

\affil[1]{University of Toronto\\
\href{mailto:jujhar.aujla@mail.utoronto.ca}{\texttt{jujhar.aujla@mail.utoronto.ca}}}
\affil[2]{School of Mathematical Sciences, Queen Mary University of London\\
\href{mailto:t.prellberg@qmul.ac.uk}{\texttt{t.prellberg@qmul.ac.uk}}}
\affil[3]{University of the Fraser Valley\\
\href{mailto:nirvair.sandhu@student.ufv.ca}{\texttt{nirvair.sandhu@student.ufv.ca}}}
\date{}

\begin{document}
\maketitle

\begin{abstract}
Fix one colour class in the checkerboard colouring of an $n\times n$ integer
grid, and let $\Mfour(n,\eps)$ be the largest subset having at most two points
in every row, column, and diagonal of slopes $\pm1$.  We prove the
near-saturation bound $\Mfour(n,\eps)\leq2n-4$ for $n\geq6$.  The proof uses
first and second moments of the four line families: a hypothetical set of
size $2n-3$ produces row, column, and diagonal deficits whose exact moment
identities contradict Cauchy--Schwarz.  A finite argument handles $n=6$.

The same identity extends to arbitrary deficit multisets.  It gives
$\Mfour(n,\eps)\leq2n-d$ whenever $d\geq4$ is an integer and
$n\geq3d-4$, and an entirely discrete asymptotic estimate
\[
  \Mfour(n,\eps)\leq(\sqrt{21}-3)n+8.
\]
We also prove a general continuum-to-discrete theorem for the associated
four-direction fractional packing problem.  Applying it to the exact
continuum dual certificate constructed in earlier work yields, for both
colours,
\[
  \Lmono(n,\eps)\leq\alpha n+O(1),
  \qquad \alpha\approx1.5768233968738,
\]
and hence the same upper bound for $\Mfour$ and for checkerboard
no-three-in-line sets.
\end{abstract}

\medskip
\noindent\textbf{2020 Mathematics Subject Classification.}
Primary 52C10; Secondary 05B40, 90C05.

\smallskip
\noindent\textbf{Keywords.}
No-three-in-line problem; checkerboard grid; discrete tomography; line-sum
dependencies; second moments; fractional packing.

\section{Introduction}

The classical no-three-in-line problem asks for the largest number of points
that can be selected from an $n\times n$ integer grid with no three on one
Euclidean line.  It originated in Dudeney's recreational problems
\cite[Problem~317]{dudeney-1917}; early systematic treatments include
Adena, Holton and Kelly~\cite{adena-holton-kelly-1974}, Guy and
Kelly~\cite{guy-kelly-1968}, and Hall, Jackson, Sudbery and
Wild~\cite{hall-et-al-1975}.  A long computational line includes work of
Anderson~\cite{anderson-1979}, Flammenkamp~\cite{flammenkamp-1992,flammenkamp-1998},
and, more recently, comparisons of integer programming and learning-based
heuristics~\cite{ramanathan-et-al-2025} and a constraint-programming
formulation~\cite{prellberg-csp-2026}. General background appears in
Brass, Moser and Pach~\cite[Section~10.1]{brass-moser-pach-2005}; the related problem of
selecting points in general position from an arbitrary planar point set is
studied in~\cite{froese-et-al-2017}. The elementary row bound is $2n$, while
the best known asymptotic construction has $(3/2-o(1))n$
points~\cite{hall-et-al-1975}; the asymptotic behaviour remains unresolved.
For the related unrestricted problem allowing at most $k$ points on each
line, Ghosal et al.~\cite{ghosal-et-al-2026} prove that the maximum is $kn$
for every fixed $k\geq3$ and sufficiently large $n$. Their result leaves
the classical case $k=2$ open.

We study the checkerboard-restricted variant introduced in
\cite{prellberg-checkerboard-2026}.  Put $N=n-1$ and
\[
  \C_\eps=\{(x,y)\in\{0,\ldots,N\}^2:x+y\equiv\eps\pmod2\},
  \qquad \eps\in\{0,1\}.
\]
Write $\Dmono(n,\eps)$ for the largest no-three-in-line subset of
$\C_\eps$, and let $\Dmono(n)$ denote the larger of the two colour-class
optima.  Call a subset of $\C_\eps$ \emph{four-direction admissible} if
every row, column, difference diagonal, and sum diagonal contains at most two
of its points, and let $\Mfour(n,\eps)$ be the corresponding maximum.  Every
no-three-in-line set is four-direction admissible, so
\[
  \Dmono(n,\eps)\leq\Mfour(n,\eps).
\]
Because the slope-$\pm1$ diagonals are monochromatic, their total capacities
give the elementary upper bound $2n-2$ for $n\geq2$.

\paragraph{Relationship with the earlier checkerboard paper.}
The present paper is a companion to~\cite{prellberg-checkerboard-2026}.
That paper introduced the checkerboard problem, the four-direction linear-programming relaxation and its symmetry
reductions, and an exact continuum dual certificate with objective
$\alpha\approx1.5768233968738$.  Its first arXiv version suggested the bound
$2n-4$ by a boundary forcing argument but explicitly left it unproved, and it
did not transfer the continuum certificate to the finite grids.  The new
contributions here are: a proof of $2n-4$ for the stronger four-direction
problem; a multiple-deficit extension yielding both fixed-deficit and
linear-deficit bounds; and a continuum-to-discrete sampling theorem that
applies to both checkerboard colours.  The revised version
of~\cite{prellberg-checkerboard-2026} records the resulting bound
$\Dmono(n)\leq\alpha n+O(1)$, citing \cref{thm:transference} below and
sketching its proof in the odd-fat case, and cites
\cref{thm:main-intro} for the bound $2n-4$ in place of the earlier heuristic.
The moment arguments and the general sampling theorem
are independent of the particular continuum certificate.  Only the
application giving the coefficient $\alpha$ uses the existence of the
feasible profiles established in~\cite[Theorem~1]{prellberg-checkerboard-2026}.
Their construction is due to that earlier work; a separate exact verification
is provided in the ancillary files. The ancillary files also give independently
checkable certificates for the finite values $\Dmono(n,\eps)$, for both
colours and $2\leq n\leq16$, reported there.

The moment identities used below also have a natural place in the literature
on discrete tomography.  For the four directions $0,\infty,+1,-1$, the
relevant total, first-moment, and second-moment identities are global
line-sum dependencies; see Hajdu and Tijdeman~\cite[Remark~4]{hajdu-tijdeman-2001}
and Stolk and Batenburg~\cite[Section~2.2]{stolk-batenburg-2010}.
The novelty here lies in
applying these consistency relations to checkerboard capacity deficits and
combining them with extremal inequalities.

Our first result excludes the elementary capacity bound $2n-2$ and the
next smaller cardinality $2n-3$.

\begin{theorem}[Near-saturation obstruction]\label{thm:main-intro}
For every $n\geq6$ and $\eps\in\{0,1\}$,
\[
  \Mfour(n,\eps)\leq2n-4.
\]
Consequently $\Dmono(n,\eps)\leq2n-4$.
\end{theorem}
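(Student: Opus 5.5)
We reduce first to the case of cardinality exactly $2n-3$. Four-direction admissibility is preserved under deleting points, so any admissible set of size at least $2n-3$ contains one, $S\subseteq\C_\eps$, with $|S|=2n-3$. We then aim for a contradiction for $n\geq7$ and treat $n=6$ separately by a finite search.

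\textbf{Deficit variables.} Write $r_y,c_x\in\{0,1,2\}$ for the row and column counts of $S$. For a difference diagonal $x-y=k$ and a sum diagonal $x+y=s$ carrying points of colour $\eps$, write $d_k,e_s$ for their counts. Define
\[
  a_y=2-r_y,\qquad b_x=2-c_x,\qquad \delta_k=\kappa_k-d_k,\qquad \eta_s=\kappa'_s-e_s,
\]
where $\kappa_k,\kappa'_s=\min(2,\text{length})$ are the diagonal capacities. All these deficits are nonnegative integers. Since $|S|=2n-3$:
\begin{itemize}
\item $\sum_y a_y=\sum_x b_x=3$;
\item the diagonal capacities in each slope family sum to $2n-2$ (the computation behind the elementary bound), so $\sum\delta_k=\sum\eta_s=1$.
\end{itemize}
Thus each diagonal family has exactly one unit of deficit, at a single diagonal $k_0$, respectively $s_0$.

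\textbf{Moment identities.} Center the coordinates by $u=x-N/2$ and $v=y-N/2$, so the grid is symmetric. Compute $\sum_{p\in S}u$, $\sum_{p\in S}v$, $\sum_{p\in S}u^2$ and $\sum_{p\in S}v^2$ in two ways, using
\[
  u\pm v=\tfrac12\bigl((u+v)\pm(u-v)\bigr),\qquad (u+v)^2+(u-v)^2=2u^2+2v^2 .
\]
Each moment can be expanded either via rows and columns or via diagonals. Substituting counts $=$ capacity $-$ deficit gives a constant term from the full capacity profiles plus a linear combination of deficit moments.

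\begin{itemize}
\item For the first moments: the full row/column profile is centred, and the full diagonal profile is centred as well, assuming the capacity profile of the $\eps$-diagonals is symmetric under $k\mapsto-k$ (this needs checking for each parity of $n$ and $\eps$). This gives linear relations such as
\[
  \sum_y a_y v_y+\sum_x b_x u_x=\text{(a linear form in } s_0,\ k_0\text{)} .
\]
\item For the second moments, one obtains
\[
  2\sum_x b_x u_x^2+2\sum_y a_y v_y^2=k_0^2+s_0^2+K(n,\eps),
\]
where $K$ is an explicit constant. It comes from the fact that the diagonal capacity profile is truncated at $2$ (together with the length-one corner diagonals), whereas the row/column profile is flat. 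Computing $K$ exactly, with correct parity bookkeeping, is the first concrete step.
\end{itemize}

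I expect $K$ to be of order $n^2$ with a definite sign. In that case the second-moment identity forces the row and column deficits to sit far from the centre, i.e.\ near the boundary, relative to the diagonal deficits.

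\textbf{Extremal step and main obstacle.} Combine the linear and quadratic constraints with Cauchy--Schwarz on the three units of row deficit and the three units of column deficit. For example,
\[
  \Bigl(\sum_y a_y v_y\Bigr)^2\leq 3\sum_y a_y v_y^2 ,
\]
together with $|v_y|\leq N/2$, $|u_x|\leq N/2$, and the ranges of $k_0,s_0$. The goal is to show that the equations are incompatible for $n\geq7$.

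This is the step I expect to be hardest, for two reasons. The inequalities are tight near configurations with all deficits on the boundary lines, and the integrality and parity of $k_0,s_0$ (they must have the parity forced by $\eps$) are likely needed to close the gap. My first attempt will be as follows:
\begin{enumerate}
\item Use the identity $\sum(u+v)+\sum(u-v)=2\sum u$ separately for $u$ and $v$, to express $k_0$ and $s_0$ through the row/column deficit first moments.
\item Substitute into the second-moment identity.
\item Maximize the resulting quadratic over the deficit distributions by convexity, which puts the extremes at boundary points.
\end{enumerate}
This should show that the maximum falls short of $K$ by a positive margin once $n\geq7$.

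Finally, for $n=6$ the margin presumably vanishes. There we will enumerate the admissible sets in the $18$-point colour classes directly, or enumerate the finitely many deficit patterns allowed by the identities, to confirm $\Mfour(6,\eps)\leq8$. The consequence for $\Dmono$ then follows from $\Dmono(n,\eps)\leq\Mfour(n,\eps)$.
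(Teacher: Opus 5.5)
Your plan matches the paper's proof: reduce to $|S|=2n-3$ (three row and three column deficit units, one unit in each diagonal family), substitute the first-moment relations into the second-moment identity, and apply Cauchy--Schwarz plus concavity over the region cut out by the ranges of $k_0,s_0$. The paper does this in uncentred form, obtaining $F_N(\mathbf a)+F_N(\mathbf b)=R_{N,\eps}$ against the lower bound $N^2/3$, and settles $n=6$ by a finite check on deficit triples with multiplicity at most two. One correction to your expectations: the constant works out to $K=-2\Gamma_{N,\eps}\approx-2N^3/3$, which is cubic and negative rather than of order $n^2$ (and your $s_0^2$ should be the centred $(s_0-N)^2$), so for $n\geq7$ the contradiction needs no parity or integrality input, and integrality is used only at $n=6$.
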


The threshold is best possible: for $n=5$, one colour contains an eight-point
no-three-in-line set, attaining $2n-2$.  Allowing an arbitrary number of
missing capacity units gives two further consequences.

\begin{theorem}[Multiple-deficit bounds]\label{thm:moment-extensions-intro}
For either checkerboard colour $\eps\in\{0,1\}$:
\begin{enumerate}
\item if $d\geq4$ is an integer and $n\geq3d-4$, then
\[
  \Mfour(n,\eps)\leq2n-d;
\]
\item for every $n\geq4$,
\[
  \Mfour(n,\eps)\leq(\sqrt{21}-3)n+8.
\]
\end{enumerate}
The same bounds hold for $\Dmono(n,\eps)$.
\end{theorem}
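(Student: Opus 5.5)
The plan is to run the same moment argument as in the proof of \cref{thm:main-intro}, but with an arbitrary deficit multiset in place of the specific deficits forced by a set of size $2n-3$. Fix an admissible $S\subseteq\C_\eps$ with $|S|=2n-d$. Introduce the deficits
\begin{itemize}
\item $a_y=2-r_y\ge0$ for each row $y$;
\item $b_x=2-c_x\ge0$ for each column $x$;
\item $\delta_t=\kappa_t-\#\{p\in S:x-y=t\}\ge0$ for each difference diagonal;
\item $\delta'_s=\kappa'_s-\#\{p\in S:x+y=s\}\ge0$ for each sum diagonal,
\end{itemize}
where $\kappa,\kappa'\in\{0,1,2\}$ are the diagonal capacities, i.e.\ $\min(2,\text{length})$. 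The totals give $\sum a_y=\sum b_x=d$, and $\sum\delta_t=D_--(2n-d)$, $\sum\delta'_s=D_+-(2n-d)$, where $D_\pm$ are the total diagonal capacities of the colour class.

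The key identities are obtained by summing $1$, $x$, $y$, $x\pm y$, and $(x+y)^2+(x-y)^2=2x^2+2y^2$ over the points of $S$. Each sum is rewritten line by line as (full-capacity moment) minus (deficit moment). I would work in centred coordinates $u=x-N/2$, $v=y-N/2$, so that the first-moment identities
\[
\sum (u+v)\delta'=\sum u\,b+\sum v\,a,\qquad \sum (u-v)\delta=\sum u\,b-\sum v\,a
\]
can be used to remove linear terms. The second-moment identity then reads
\[
\sum_s s^2\delta'_s+\sum_t t^2\delta_t-2\sum_x u_x^2b_x-2\sum_y v_y^2a_y=K_\eps(n).
\]
Here $K_\eps(n)$ is an explicit quadratic-in-$n$ constant, of order $n^2$ with a definite sign. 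It records the mismatch between capacity $2$ on the long lines and the reduced capacities of the corner diagonals, and I would compute it separately for the two colours and for both parities of $n$.

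The extremal step is then to bound the left side. The row and column terms contribute at most $2d\cdot(N/2)^2$ in absolute value. The diagonal deficits have total mass $O(d)$ but are pinned: the short corner diagonals carry forced deficit, and the rest are constrained by the first moments. Applying Cauchy--Schwarz (variance $\ge$ mean$^2$/mass) to each of the four families, together with the first-moment relations, turns the identity into an inequality $Q(d,n)\ge0$, with $Q$ quadratic. For part (1) I would check that $Q(d,n)<0$ once $n\ge 3d-4$ and $d\ge4$ is an integer, using integrality to sharpen the constant where needed. The case $d=4$ at the threshold $n=8$ reduces to the finite check already used for \cref{thm:main-intro}, or is handled by direct inspection.

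For part (2) I would set $d=(2-c)n$ and optimise the same inequality over deficit distributions. The worst case places row and column deficits at the extreme coordinates and spreads the diagonal deficits as evenly as the first moments allow. To leading order this gives $c^2+6c-12\ge0$, i.e.\ $c\le\sqrt{21}-3$. Tracking the lower-order terms should then give the additive constant $8$ for all $n\ge4$. The main obstacle is this extremal optimisation, specifically obtaining a \emph{tight} upper bound on the diagonal second moments given their mass and first moments, since the diagonal deficits are not freely placeable. I would first try the crude bound that a family of total mass $M$ and given mean has second moment at most $M\cdot(N)^2$, then refine it with the monochromatic parity constraint on diagonal indices if the constants do not come out. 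The final statements for $\Dmono$ follow immediately from $\Dmono\le\Mfour$.
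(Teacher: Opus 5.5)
Your starting identity is the right one (it is \cref{prop:multi-identity} up to a factor two), but the plan built on it fails. The first problem is that the constant is not quadratic. At full capacity each centred diagonal family has square moment $M_\pm(N)\sim 2N^3/3$, since its labels spread over $[-N,N]$ with density one. The coordinate side is $8T_N\sim 2N^3/3$. Hence $K_\eps(n)=M_u+M_v-8T_N=2\Gamma_{N,\eps}\sim 2N^3/3$, with $\Gamma_{N,\eps}\ge N(N-4)(N+1)/3$ in every parity and colour case. This cubic, positive constant is the whole mechanism. The $2r$ missing diagonal units contribute at most $2rN^2$, and the row/column terms are subtracted, so $r\gtrsim N/3$. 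A constant of order $n^2$ could force only a bounded deficit, never the threshold $n\ge 3d-4$. With the correct constant, part~(1) needs no Cauchy--Schwarz, first moments, integrality or finite check. To exclude $|S|=2n-d+1$ (so $r=d-3$ per diagonal family), discard the row/column terms, bound every diagonal label by $N$, and compare $(d-3)N^2$ with $N(N-4)(N+1)/3$ for $N\ge 3d-5$. Excluding $|S|=2n-d$, as your setup suggests, is impossible: $\Mfour(11,0)=17$ attains the bound at $(n,d)=(11,5)$.

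The second problem is that your inequalities point the wrong way, and the key idea for part~(2) is missing. The diagonal terms enter with a plus sign against a positive constant, so you need upper bounds on them and lower bounds on the row/column terms. Cauchy--Schwarz gives lower bounds on the diagonal moments, and $2d(N/2)^2$ is an upper bound on the row/column moments, so neither helps. First moments do not rescue this. The deficits can be arranged essentially symmetrically, making all first moments (nearly) zero, so Cauchy--Schwarz yields essentially nothing. Your worst case is also reversed: the extremal configuration has diagonal deficits at the outermost labels and row/column deficits near the centre.

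What is needed are order-statistic bounds exploiting that each line carries at most two deficit units. Half the diagonal part is at most $N^2r-Nr^2/2+r^3/12+4N^2$, the contribution of the $r$ outermost capacity units. Since each index occurs at most twice, each of the two row/column parts is at least $r(r+1)(2r+1)/96\ge r^3/48$. Together these give $\rho-\rho^2/2+\rho^3/24\ge 1/3-O(1/N)$ for $\rho=r/N$. Its smallest root $5-\sqrt{21}$ gives the coefficient $\sqrt{21}-3$, and the leading-order condition is $c^2+6c-12\le 0$, not $\ge 0$. Your proposed fallbacks cannot reach this coefficient. The crude mass-times-$N^2$ diagonal bound gives only $5/3$. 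Even sharp diagonal order statistics without the row/column lower bound give only $2^{2/3}$.
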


The coefficient $\sqrt{21}-3\approx1.5825756949$ comes entirely from the
discrete second-moment argument.  A slightly sharper coefficient follows
from the fractional problem.  Let $\Lmono(n,\eps)$ denote the associated
four-direction fractional packing optimum, defined in
\cref{sec:transference}.  Since characteristic vectors of four-direction
admissible sets are feasible,
\[
  \Dmono(n,\eps)\leq\Mfour(n,\eps)\leq\Lmono(n,\eps).
\]

\begin{theorem}[Fractional asymptotic bound]\label{thm:asymptotic-intro}
Let $\alpha$ be the middle real root of
\[
  401\alpha^3-1744\alpha^2+2240\alpha-768=0.
\]
There is an absolute constant $C$ such that, for every $n\geq2$ and
$\eps\in\{0,1\}$,
\[
  \Dmono(n,\eps)\leq\Mfour(n,\eps)
  \leq\Lmono(n,\eps)\leq\alpha n+C.
\]
In particular,
\[
  \limsup_{n\to\infty}\frac{\Mfour(n,\eps)}n
  \leq\alpha,
  \qquad
  1.576823396873<\alpha<1.576823396874.
\]
\end{theorem}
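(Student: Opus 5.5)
The chain $\Dmono(n,\eps)\leq\Mfour(n,\eps)\leq\Lmono(n,\eps)$ has already been recorded, so the substance of \cref{thm:asymptotic-intro} is the inequality $\Lmono(n,\eps)\leq\alpha n+C$. I would derive it from the general continuum-to-discrete theorem (\cref{thm:transference}, stated in \cref{sec:transference}), applied to the exact continuum dual certificate of \cite[Theorem~1]{prellberg-checkerboard-2026}. The plan is weak duality. $\Lmono(n,\eps)$ is a packing LP: nonnegative weights on the cells of $\C_\eps$, with each row, column, difference diagonal and sum diagonal carrying total weight at most $2$. Any nonnegative assignment of dual weights $u,v,p,q$ to the four line families, such that every cell of $\C_\eps$ is covered with total weight at least $1$, bounds $\Lmono$ by twice the sum of all the dual weights. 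The task is therefore to manufacture such a discrete dual from the continuum profiles.

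First, I would take the continuum certificate. It consists of four nonnegative profiles $f_0,f_\infty,f_+,f_-$ on the parameter intervals of the lines through the square $[0,1]^2$, satisfying $f_0(y)+f_\infty(x)+f_+(x-y)+f_-(x+y)\geq c$ pointwise on $[0,1]^2$. Here $c$ is the normalisation accounting for the checkerboard density $1/2$ of lines of each diagonal family, and the total integral of the profiles is normalised to $\alpha$. Second, I would sample: set $u_y=f_0(y/N)$ and similarly for the other families, with the diagonals indexed by $(x\mp y)/N$. The scaling is chosen so that a point $(x,y)\in\C_\eps$ receives covering weight at least $1$ up to an error. Third, I would control two error terms. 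The objective is a Riemann sum for $\int f$, which equals the integral up to $O(1)$ if each profile is of bounded variation (piecewise smooth or monotone). Note that the diagonal sums run only over lines of the fixed colour's parity, but this is a sum over every other integer, so it is still a Riemann sum. The pointwise covering deficit is repaired by adding a uniform $O(1/n)$ correction to, say, the row weights, or by rescaling all weights by $1+O(1/n)$. Either fix costs $O(1)$ in the objective. Together these give $\Lmono(n,\eps)\leq\alpha n+C$ for every $n$, and hence for both colours. Small $n$ is absorbed into $C$ using the trivial bound $\Lmono\leq2n$.

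The main obstacle is the covering repair near the boundary and at the discontinuities of the profiles. Pointwise sampling of a discontinuous piecewise profile can undershoot at grid points adjacent to a break, and near corners the diagonals are short. I would first try to replace each $f$ by its upper envelope over a window of width $1/N$ before sampling, i.e.\ its local supremum. This makes the discrete covering inequality hold exactly and increases the objective only by a sum of jump heights times the mesh, which is $O(1)$ in total provided the profiles have finitely many pieces and bounded variation. Here the ancillary exact verification of the certificate is used.

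Finally, for the numerical statement I would check sign changes of $g(\alpha)=401\alpha^3-1744\alpha^2+2240\alpha-768$ at rational endpoints $1.576823396873$ and $1.576823396874$ in exact arithmetic. I would then confirm that this is the middle root by locating the other two roots, e.g.\ via sign checks $g(0)<0$, $g(1)>0$ and large $\alpha$. The $\limsup$ statement follows immediately from the linear bound.
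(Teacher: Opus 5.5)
Your top-level route is the paper's. The theorem is \cref{cor:alpha}, namely \eqref{eq:relaxation-chain} together with \cref{thm:transference} applied to the profiles $A,B$ of \cite[Theorem~1]{prellberg-checkerboard-2026}. Your sign checks at $0$, $1$, the two rational endpoints and large $\alpha$ do correctly identify the middle root. The difference lies in how you carry out the sampling, and there your route differs from the paper's and your diagnosis of the difficulty is wrong.

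The paper samples the symmetry-reduced odd duals of \cref{prop:odd-duals}: $a_i=A(i/m)$ and $b_i=B(i/m)$, with the half-mesh shift $a_i=A((i+1/2)/m)$ for the thin class. Each reduced constraint is then literally \eqref{eq:continuum-obstacle} at a point of the closed triangle $T$. Even $n$ is reached via $\Lmono(n,\eps)\leq\Lmono(n+1,\eps)$.

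You instead sample the unreduced dual at $(x/N,y/N)$. This is valid, but the certificate is given as $A,B$ on $T$, so you must first unfold it. Take $f_0=f_\infty=B(\min\{2t,2-2t\})$, $f_+(\delta)=A(1-|\delta|)$ and $f_-(\sigma)=A(1-|\sigma-1|)$. The fundamental domain $0\leq\eta\leq\xi\leq\frac12$ of the square maps onto $T$ by $(\xi,\eta)\mapsto(\xi+\eta,\xi-\eta)$. Hence the four-profile inequality holds on the closed square with right-hand side exactly $1$.

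The density $\frac12$ of same-colour diagonals does not belong in a constant $c$ on the right-hand side. It belongs in the objective, which is $N\bigl(2\int f_0+2\int f_\infty+\int f_++\int f_-\bigr)+O(1)=4N\int_0^1(A+B)+O(1)$.

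With this set-up your ``main obstacle'' disappears. Each discrete cover constraint is the continuum inequality evaluated at a point of the closed square, so the sampled dual is exactly feasible. No envelope, additive correction or rescaling is needed, and the profiles are continuous in any case. The only error is a bounded-variation Riemann-sum error in the objective, as in \cref{lem:bv-riemann}.

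Your version buys uniformity: it treats every $n$ and both colours at once, with no symmetry reduction and no monotonicity step. The paper's version works directly with the form in which the certificate is stated, makes the thin-class shift explicit, and gives the explicit constant $C_{A,B}$.
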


The result is one-sided: it proves neither optimality of the continuum
certificate nor a matching lower bound.

\paragraph{Subsequent correspondence and related work.}
The circulation of~\cite{prellberg-checkerboard-2026} prompted several
readers to investigate the proposed $2n-4$ bound and brought related
computational work to our attention. The present collaboration began on
21 July 2026, when Aujla and Sandhu contacted Prellberg with a first- and
second-moment argument for the four-direction $2n-4$ bound. In his reply that day, Prellberg proposed extending the method
to multiple deficits, including the fixed-deficit bounds and the asymptotic
coefficient $\sqrt{21}-3$. These exchanges led to the joint work developed
here.

Subsequent communications described related approaches to the same finite
bound. Zurnaci~\cite{zurnaci-communication-2026} supplied a centred moment
argument. Chen~\cite{chen-communication-2026} supplied explicit quadratic
line weights, reporting the asymptotic coefficient $2^{2/3}$, together with
arguments for the finite bound. Payzin~\cite{payzin-communication-2026}
supplied a rational piecewise-linear continuum cover and a discretisation
argument reporting the bound $(324n+2244)/205$, supplemented by finite
certificates and enumeration for the smaller grids. These communications
illustrate the usefulness of moment identities and line covers for this
problem. For comparison, these coefficients and those of the present paper
satisfy
\[
  \alpha<\frac{324}{205}<\sqrt{21}-3<2^{2/3}.
\]
These descriptions concern unpublished work communicated to us.

Pochuev~\cite{pochuev-solver-2026} and
Oliveira~\cite{oliveira-communication-2026} also communicated computational
work reporting the values $26,27,29,30$ for $\Dmono(n)$ at
$n=17,18,19,20$, respectively, using constructions, rational line-cover
certificates and exhaustive searches. Pochuev's public repository records
these computations and reports exact results through $n=21$, with $n=22$
still open as of 10 September 2026. A separate public release by
Cohen~\cite{cohen-n23-2026} reports $\Dmono(23)=35$.
Motivated by Pochuev's results,
Prellberg also undertook exploratory stochastic SAT searches and communicated
configurations for larger grids in his reply of 16 July 2026, without
claiming optimality. These finite investigations complement the uniform and
asymptotic bounds studied here.

\paragraph{Organisation.}
The paper is organised as follows.
\Cref{sec:setting} fixes the notation and elementary capacities.
\Cref{sec:moment-proof} proves the near-saturation theorem.
\Cref{sec:multiple-deficits} develops the centred deficit identity and its
two consequences.  \Cref{sec:transference} defines the fractional relaxation,
proves the general continuum-to-discrete theorem and applies it to the
earlier continuum bound.  \Cref{sec:limitations} discusses the remaining
questions.

\section{Setting and elementary structure}\label{sec:setting}

Fix $n\geq1$, put $N=n-1$, and write
\[
  \G_n=\{0,1,\ldots,N\}^2.
\]
For $\eps\in\{0,1\}$, the corresponding checkerboard class is
\[
  \C_\eps=\{(x,y)\in\G_n:x+y\equiv\eps\pmod2\}.
\]

\begin{definition}
A set $S\subseteq\ZZ^2$ is \emph{no-three-in-line} if no three distinct
points of $S$ are collinear. Define
\[
  \Dmono(n,\eps)=
  \max\{|S|:S\subseteq\C_\eps\text{ is no-three-in-line}\},
\]
and set $\Dmono(n)=\max_{\eps\in\{0,1\}}\Dmono(n,\eps)$.
We call $S\subseteq\C_\eps$ \emph{four-direction admissible} if every row,
column, difference diagonal $x-y=d$, and sum diagonal $x+y=s$ contains at
most two points of $S$, and define
\[
  \Mfour(n,\eps)=
  \max\{|S|:S\subseteq\C_\eps\text{ is four-direction admissible}\}.
\]
\end{definition}

Every no-three-in-line set is four-direction admissible, although the
converse need not hold. Hence
\[
  \Dmono(n,\eps)\leq\Mfour(n,\eps).
\]
The moment argument will work entirely within this elementary relaxation.

Only parameters $d\equiv\eps\pmod2$ and $s\equiv\eps\pmod2$ meet
$\C_\eps$. Their lengths in the grid are
\[
  N+1-|d|\quad(-N\leq d\leq N),
  \qquad
  N+1-|s-N|\quad(0\leq s\leq2N).
\]
To include the singleton boundary diagonals in the same notation, define
\[
  c_d(d)=\min\{2,N+1-|d|\},
  \qquad
  c_s(s)=\min\{2,N+1-|s-N|\}.
\]

\begin{lemma}[Diagonal capacity]\label{lem:capacity}
For $n\geq2$ and each $\eps\in\{0,1\}$,
\[
  \sum_{\substack{-N\leq d\leq N\\d\equiv\eps\ (2)}}c_d(d)
  =
  \sum_{\substack{0\leq s\leq2N\\s\equiv\eps\ (2)}}c_s(s)
  =2N.
\]
Consequently every four-direction admissible set has at most $2N=2n-2$
points.
\end{lemma}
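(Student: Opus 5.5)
We will compute both sums directly, count the singleton boundary diagonals explicitly, and then deduce the cardinality bound by double counting along one diagonal family.

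\emph{Difference diagonals.} The parameters meeting $\C_\eps$ are $d\in\{-N,\dots,N\}$ with $d\equiv\eps\pmod2$.
\begin{itemize}
\item[] There are $N+1$ such values if $N\equiv\eps\pmod 2$, and $N$ values otherwise.
\item[] Every admissible $d$ has $c_d(d)=2$ except those with $N+1-|d|=1$, namely $d=\pm N$, where $c_d(d)=1$.
\item[] The values $d=\pm N$ are admissible exactly when $N\equiv\eps\pmod 2$.
\end{itemize}
In the case $N\equiv\eps\pmod2$ we have $N+1$ parameters, two of them singletons. Since $n\geq2$ gives $N\geq1$, the values $N$ and $-N$ are distinct, so the sum is $2(N-1)+1+1=2N$. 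In the other case all $N$ parameters have capacity $2$, so the sum is $2N$ again.

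\emph{Sum diagonals.} Substituting $s=N+t$ gives $t\in\{-N,\dots,N\}$ with $t\equiv\eps-N\pmod 2$, and $c_s(N+t)=\min\{2,N+1-|t|\}$. This is exactly the difference-diagonal computation with $\eps$ replaced by the residue of $\eps-N$. That computation produced $2N$ for either parity, so this sum is also $2N$.

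\emph{Cardinality bound.} Let $S\subseteq\C_\eps$ be four-direction admissible. Every point of $S$ lies on exactly one difference diagonal $x-y=d$, and that diagonal meets $\C_\eps$. Such a diagonal contains at most two points of $S$, and at most $N+1-|d|$ points in total because that is its length. Hence it contains at most $c_d(d)$ points of $S$. Summing over all admissible $d$ gives $|S|\leq 2N=2n-2$.

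The argument is entirely routine. The only point needing care is the parity bookkeeping: the singleton endpoints $\pm N$ are either both present or both absent, and this is what makes each sum equal $2N$ independently of the parities of $N$ and $\eps$.
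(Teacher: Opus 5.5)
Your proof is correct and follows the paper's argument. You use the same parity count of compatible labels, with the singleton extremes $\pm N$ present exactly when $N\equiv\eps\pmod 2$, and the same summation of occupancy bounds over one diagonal family; your substitution $s=N+t$ is simply a tidy way to reuse the difference-diagonal count, where the paper instead observes directly that colour zero contains the singleton sum diagonals $s=0,2N$ and colour one does not.
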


\begin{proof}
Consider first the difference diagonals. If $\eps\equiv N\pmod2$, there are
$N+1$ compatible parameters, and the two extreme diagonals $d=\pm N$ have
capacity one. Their total capacity is therefore $2(N+1)-2=2N$. Otherwise
there are $N$ compatible parameters, each of capacity two. For the sum
diagonals, colour zero contains the singleton extremes $s=0,2N$, whereas
colour one does not; the same count again gives total capacity $2N$.
Summing the occupancy bounds over either diagonal family proves the final
assertion.
\end{proof}

The condition $n\geq2$ is necessary. For $n=1$, the point $(0,0)$ gives
$\Dmono(1,0)=1$, whereas $2n-2=0$.

\begin{example}[Why the threshold cannot begin at five]
For $n=5$, the colour-one set
\[
\begin{split}
\{&(0,1),(0,3),(1,0),(1,4),\\
  &(3,0),(3,4),(4,1),(4,3)\}
\end{split}
\]
is no-three-in-line and has $8=2n-2$ points. Thus the conclusion of
\cref{thm:main-intro} fails at $n=5$.
\end{example}

The symmetries of the square will also be used in the fractional relaxation
below. The identity, the half-turn, and the reflections in $x=y$ and
$x+y=N$ preserve $x+y$ modulo two. Each of the remaining four symmetries
adds $N$ modulo two. Thus, when $n$ is odd, all eight symmetries preserve
each colour, whereas when $n$ is even, four of them interchange the two
colours. In particular, $\Dmono(n,0)=\Dmono(n,1)$ for even $n$.

\section{The four-direction moment obstruction}\label{sec:moment-proof}

The argument proceeds in three stages. Near saturation first leaves a small
set of integral deficits in each of the four line families. Their first two
moments then produce an exact identity. Finally, Cauchy--Schwarz and concavity give a lower bound incompatible
with that identity. This comparison handles $n\geq7$; the remaining case
$n=6$ is settled by the integer structure of the deficit triples.

\begin{remark}[Discrete-tomography viewpoint]\label{rem:tomography}
For a finitely supported weight function on $\ZZ^2$, its line sums in the
four directions $0,\infty,+1,-1$ satisfy global consistency relations of
degrees zero, one, and two.  In the present coordinates, the degree-one
relations compare the first moments of row and column sums with those of the
two diagonal families, while the degree-two relation is the line-sum form of
\[
  2x^2+2y^2=(x-y)^2+(x+y)^2.
\]
These dependencies are standard in discrete tomography; for the four
directions used here, see Hajdu and Tijdeman~\cite[Remark~4]{hajdu-tijdeman-2001}
and Stolk and Batenburg~\cite[Section~2.2]{stolk-batenburg-2010}. A later
explicit construction treats these four directions in
\cite[Example~4.1]{hajdu-tijdeman-2017}. Our use of them is
extremal rather than reconstructive: we apply the relations to nearly
saturated checkerboard capacity profiles and combine the resulting deficit
identities with convexity and order estimates.
\end{remark}

\subsection{Reduction and deficit triples}

Assume, for a contradiction, that a four-direction admissible set
$S\subseteq\C_\eps$ has at least $2N-1$ points. By \cref{lem:capacity}, its
size is either $2N-1$ or $2N$. In the latter case, delete one point. It
therefore suffices to rule out
\begin{equation}\label{eq:size-near-saturated}
  |S|=2N-1.
\end{equation}

Each compatible diagonal family has total capacity $2N$, so its nonnegative
integer deficits sum to one. Hence there is a unique difference diagonal
$d_0$ and a unique sum diagonal $s_0$ whose occupancy falls one short of its
capacity; every other compatible diagonal is saturated. This includes the
possibility that the deficient diagonal is a singleton and is therefore
empty.

Every row and column contains at most two points of $S$. Relative to
two nominal slots in each of the $N+1$ columns,
\eqref{eq:size-near-saturated} leaves exactly three vacant slots. Record their
column indices, with multiplicity, as
\[
  \mathbf a=(a_1,a_2,a_3).
\]
Thus a column of occupancy one contributes its index once, and an empty
column contributes it twice. Define the row-deficit triple
$\mathbf b=(b_1,b_2,b_3)$ in the same way. No index can occur three times in
either triple. Conversely, any triple with multiplicities at most two
determines an occupancy vector, although it need not arise from a
geometrically realisable point set. Allowing all such triples is therefore a
valid relaxation.

Put
\[
  X=\sum_{i=1}^3a_i,
  \quad Q_a=\sum_{i=1}^3a_i^2,
  \qquad
  Y=\sum_{i=1}^3b_i,
  \quad Q_b=\sum_{i=1}^3b_i^2,
\]
and
\[
  T_2=\sum_{j=0}^N j^2=\frac{N(N+1)(2N+1)}6.
\]
Subtracting the vacant slots from two nominal slots on every coordinate line
gives
\begin{align}
  \sum_{(x,y)\in S}x&=N(N+1)-X,
  &\sum_{(x,y)\in S}x^2&=2T_2-Q_a,\label{eq:coordinate-moments-x}\\
  \sum_{(x,y)\in S}y&=N(N+1)-Y,
  &\sum_{(x,y)\in S}y^2&=2T_2-Q_b.\label{eq:coordinate-moments-y}
\end{align}

\subsection{First and second diagonal moments}

\begin{lemma}[First moments]
The deficit sums satisfy
\begin{equation}\label{eq:first-moment-identities}
  X-Y=d_0,
  \qquad
  X+Y=2N+s_0.
\end{equation}
In particular,
\begin{equation}\label{eq:first-moment-region}
  |X-Y|\leq N,
  \qquad
  2N\leq X+Y\leq4N.
\end{equation}
\end{lemma}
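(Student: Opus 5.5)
We compute the first moment of $S$ along each diagonal family in two ways: once from the coordinate moments \eqref{eq:coordinate-moments-x}--\eqref{eq:coordinate-moments-y}, and once from the diagonal occupancies. For the difference diagonals, the first gives
\[
\sum_{(x,y)\in S}(x-y)=\bigl(N(N+1)-X\bigr)-\bigl(N(N+1)-Y\bigr)=Y-X.
\]
For the second, every compatible difference diagonal except $d_0$ holds exactly $c_d(d)$ points, and $d_0$ holds $c_d(d_0)-1$. So
\[
\sum_{(x,y)\in S}(x-y)=\sum_{d\equiv\eps\ (2)}d\,c_d(d)-d_0 .
\]
The capacity profile $c_d$ is symmetric under $d\mapsto -d$, because both the set of compatible parameters and the lengths $N+1-|d|$ are. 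Hence the weighted sum vanishes, and we get $Y-X=-d_0$, that is, $X-Y=d_0$.

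The sum diagonals are handled the same way. From the coordinates,
\[
\sum_{(x,y)\in S}(x+y)=2N(N+1)-(X+Y).
\]
From the occupancies, the sum equals $\sum_s s\,c_s(s)-s_0$. The profile $c_s$ is symmetric about $s=N$ under $s\mapsto 2N-s$, since $2N-s\equiv s\pmod 2$. Combined with the total capacity $2N$ from \cref{lem:capacity}, this gives $\sum_s s\,c_s(s)=N\cdot 2N=2N^2$. Comparing the two expressions,
\[
X+Y=2N(N+1)-2N^2+s_0=2N+s_0 .
\]

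The region \eqref{eq:first-moment-region} follows at once from the ranges $-N\le d_0\le N$ and $0\le s_0\le 2N$. The only point that needs care is that the deficient diagonal may be a singleton, or of capacity one. This causes no trouble: the occupancy there is still capacity minus one, so the identities hold uniformly. The symmetry of the capacity profiles is the one step worth writing out explicitly, using the parity case split already made in the proof of \cref{lem:capacity}.
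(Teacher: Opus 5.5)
Your proposal is correct and follows the paper's proof essentially step for step: you compute $\sum_S(x-y)$ and $\sum_S(x+y)$ once from the coordinate deficits and once from the diagonal occupancies. In both arguments the symmetry of the capacity profiles under $d\mapsto-d$ and $s\mapsto 2N-s$ evaluates the full capacity-weighted first moments as $0$ and $2N^2$, and the range bounds follow from $-N\le d_0\le N$ and $0\le s_0\le 2N$. (That symmetry is read off directly from the formulas for $c_d$ and $c_s$, so the parity case split you mention is not actually needed.)
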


\begin{proof}
The capacity-weighted first moment of the difference parameters vanishes by
the involution $d\mapsto-d$. Removing the missing capacity unit at $d_0$
therefore gives
\[
  \sum_{S}(x-y)=-d_0.
\]
By \cref{eq:coordinate-moments-x,eq:coordinate-moments-y}, the left-hand side
is $Y-X$, and hence $X-Y=d_0$.

For the sum diagonals, the capacities are symmetric about $N$ and have total
$2N$, so their capacity-weighted first moment is $2N^2$. Removing the unit of
capacity missing at $s_0$ gives
\[
  \sum_{S}(x+y)=2N^2-s_0.
\]
The coordinate identities make the left-hand side
$2N(N+1)-X-Y$, proving the second equality. The parameter ranges
$-N\leq d_0\leq N$ and $0\leq s_0\leq2N$ then yield
\eqref{eq:first-moment-region}.
\end{proof}

Define the capacity-weighted diagonal square moment
\[
  K_{N,\eps}=
  \sum_{\substack{-N\leq d\leq N\\d\equiv\eps\ (2)}}c_d(d)d^2
  +
  \sum_{\substack{0\leq s\leq2N\\s\equiv\eps\ (2)}}c_s(s)s^2.
\]

\begin{lemma}[Diagonal square sums]
For $N\geq1$,
\begin{equation}\label{eq:K-formulas}
K_{N,\eps}=
\begin{cases}
\dfrac{2N(5N^2+1)}3,
  &N\text{ odd},\\[5pt]
\dfrac{2N(5N^2+4)}3,
  &N\text{ even and }\eps=0,\\[5pt]
\dfrac{2N(5N^2-2)}3,
  &N\text{ even and }\eps=1.
\end{cases}
\end{equation}
\end{lemma}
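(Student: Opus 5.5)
The plan is to evaluate $K_{N,\eps}$ by recentring the sum diagonals, so that both families become symmetric sums of squares over an interval $[-N,N]$. Put $s=N+t$ in the second sum. The constraint $0\leq s\leq2N$ becomes $|t|\leq N$, and the parity condition becomes $t\equiv\eps+N\pmod2$. The capacity becomes $c_s(N+t)=\min\{2,N+1-|t|\}$, which is the same function of $t$ as $c_d$ is of $d$. Expanding gives
\[
  s^2=N^2+2Nt+t^2 .
\]
By \cref{lem:capacity} the total capacity is $2N$, so the constant term contributes $2N\cdot N^2=2N^3$. The linear term vanishes under the involution $t\mapsto-t$. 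Hence
\[
  K_{N,\eps}=2N^3+\sum_{\substack{|d|\leq N\\ d\equiv\eps\ (2)}}c(d)\,d^2+\sum_{\substack{|t|\leq N\\ t\equiv\eps+N\ (2)}}c(t)\,t^2,
  \qquad c(k)=\min\{2,N+1-|k|\}.
\]
In each sum the capacity is $2$ except at $k=\pm N$, where it is $1$; these endpoints occur only when $k\equiv N\pmod2$. So each sum equals $2\sum k^2$ over the admissible parity class, minus $2N^2$ whenever that class contains $\pm N$.

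Next I split into cases by the parity of $N$.

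\textbf{$N$ odd.} The two parity classes $\eps$ and $\eps+N$ are complementary, so together they cover every integer in $[-N,N]$ exactly once. Exactly one of the two classes contains $\pm N$. The total is therefore
\[
  2\sum_{|k|\leq N}k^2-2N^2=\frac{2N(N+1)(2N+1)}3-2N^2 .
\]
Adding $2N^3$ should give $2N(5N^2+1)/3$; a quick expansion confirms this.

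\textbf{$N$ even.} Both sums run over the same parity class $\eps$, so the middle expression equals twice one such sum.
\begin{itemize}
\item For $\eps=0$, the class is the even integers, which include $\pm N$. With $N=2m$, I will use $\sum_{j=1}^m(2j)^2=2m(m+1)(2m+1)/3$. Then each family contributes $4\sum_{j=1}^m(2j)^2-2N^2$.
\item For $\eps=1$, the class is the odd integers, which exclude $\pm N$, so there is no endpoint correction. Using $\sum_{j=1}^m(2j-1)^2=m(2m-1)(2m+1)/3$, each family contributes $4\sum_{j=1}^m(2j-1)^2$.
\end{itemize}
Substituting $m=N/2$, doubling, and adding $2N^3$ should yield $2N(5N^2+4)/3$ and $2N(5N^2-2)/3$ respectively.

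The main risk is bookkeeping rather than any conceptual step. The two points that need care are the following.
\begin{enumerate}
\item The capacity-one endpoints must be attached to the correct parity class after the shift $s=N+t$. For $\eps=0$ this is the pair $s=0$ and $s=2N$, that is, $t=\pm N$.
\item The final polynomial simplifications should be checked against small cases, for example $N=1$ and $N=2$ for each $\eps$, computing $K_{N,\eps}$ directly from its definition.
\end{enumerate}
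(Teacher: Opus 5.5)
Your proposal is correct, and the closed forms check out in all three cases (for instance $2N^3+\tfrac{2N(N+1)(2N+1)}{3}-2N^2=\tfrac{2N(5N^2+1)}{3}$ for odd $N$). It uses the same core device as the paper: weight two on every compatible label, minus the capacity-one singleton endpoints, then closed-form power sums. What differs is how you organise the computation.

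The paper keeps the uncentred labels $0\leq s\leq 2N$. It applies the two endpoint corrections $N^2\mathbf 1_{\eps\equiv N\ (2)}$ and $2N^2\mathbf 1_{\eps=0}$ separately and evaluates four cases from a table.

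You instead recentre $s=N+t$. The total capacity $2N$ from \cref{lem:capacity}, together with the symmetry $t\mapsto-t$, then peels off $2N^3$, so that $K_{N,\eps}=2N^3+M_u+M_v$. Each centred family contributes $\tfrac{2N(N^2+2)}{3}$ if its parity class contains $\pm N$, and $\tfrac{2N(N^2-1)}{3}$ otherwise. These are exactly the quantities $M_\pm(N)$ of \eqref{eq:multi-Mplus}--\eqref{eq:multi-Mminus}, which the paper introduces only later, for \cref{prop:multi-identity}.

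Your route explains structurally why the odd-$N$ value does not depend on $\eps$: the two parity classes are complementary, and exactly one carries the endpoints. It also reduces the even case to one family doubled. The paper's direct table needs neither the capacity lemma nor the symmetry argument, at the price of more case-by-case algebra.
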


\begin{proof}
Start with weight two on every compatible parameter, and then correct for the
singleton extremes. This gives
\begin{equation}
\frac{K_{N,\eps}}2=
\sum_{d\equiv\eps\ (2)}d^2+
\sum_{s\equiv\eps\ (2)}s^2
-N^2\mathbf1_{\eps\equiv N\ (2)}
-2N^2\mathbf1_{\eps=0},
\end{equation}
where the sums are taken over the parameter ranges in the definition of
$K_{N,\eps}$. Let $Q(m)=m(m+1)(2m+1)/6$. Before simplification, the four
cases of $K_{N,\eps}/2$ are
\[
\begin{array}{c|c|l}
N&\eps&K_{N,\eps}/2\\
\hline
2m+1&0&8Q(m)+4Q(2m+1)-2N^2\\
2m+1&1&2\sum_{j=0}^{m}(2j+1)^2+
          \sum_{j=0}^{2m}(2j+1)^2-N^2\\
2m&0&8Q(m)+4Q(2m)-3N^2\\
2m&1&2\sum_{j=0}^{m-1}(2j+1)^2+
        \sum_{j=0}^{2m-1}(2j+1)^2.
\end{array}
\]
Using
$\sum_{j=0}^{r}(2j+1)^2=(r+1)(4r^2+8r+3)/3$
and simplifying gives \eqref{eq:K-formulas}.
\end{proof}

For a triple $\mathbf z=(z_1,z_2,z_3)$, define
\begin{equation}
  F_N(\mathbf z)=\sum_{i=1}^3z_i^2-
  \left(\sum_{i=1}^3z_i-N\right)^2.
\end{equation}

\begin{lemma}[Exact moment invariant]\label{lem:moment-invariant}
The deficit triples satisfy
\begin{equation}
  F_N(\mathbf a)+F_N(\mathbf b)=R_{N,\eps},
\end{equation}
where
\begin{equation}\label{eq:R-formulas}
R_{N,\eps}=
\begin{cases}
\dfrac{N(-N^2+6N+1)}3,
  &N\text{ odd},\\[5pt]
\dfrac{N(-N^2+6N-2)}3,
  &N\text{ even and }\eps=0,\\[5pt]
\dfrac{N(-N^2+6N+4)}3,
  &N\text{ even and }\eps=1.
\end{cases}
\end{equation}
\end{lemma}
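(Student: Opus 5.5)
The plan is to sum the polynomial identity
\[
  2x^2+2y^2=(x-y)^2+(x+y)^2
\]
from \cref{rem:tomography} over the points of $S$. We evaluate each side in two ways: the left-hand side through the row and column deficits, and the right-hand side through the diagonal deficits. Then we eliminate $d_0,s_0$ using the first-moment identities \eqref{eq:first-moment-identities}.

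\emph{Left-hand side.} By \eqref{eq:coordinate-moments-x} and \eqref{eq:coordinate-moments-y},
\[
  \sum_S(2x^2+2y^2)=8T_2-2Q_a-2Q_b.
\]

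\emph{Right-hand side.} A point of $S$ on the difference diagonal with parameter $d$ contributes $d^2$ to $\sum_S(x-y)^2$. Every compatible difference diagonal is saturated, i.e.\ contains exactly $c_d(d)$ points, except $d_0$, which contains $c_d(d_0)-1$. Hence
\[
  \sum_S(x-y)^2=\sum_d c_d(d)\,d^2-d_0^2 .
\]
The same reasoning applied to the sum diagonals gives
\[
  \sum_S(x+y)^2=\sum_s c_s(s)\,s^2-s_0^2 .
\]
This covers the case of an empty singleton diagonal as well, since the capacity bookkeeping is unchanged. Adding the two sums and using the definition of $K_{N,\eps}$ gives
\[
  8T_2-2Q_a-2Q_b=K_{N,\eps}-d_0^2-s_0^2 .
\]

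\emph{Eliminating the diagonal data.} By \eqref{eq:first-moment-identities}, $d_0=X-Y$ and $s_0=X+Y-2N$. The parallelogram law then gives
\[
  d_0^2+s_0^2=(X-Y)^2+\bigl((X-N)+(Y-N)\bigr)^2=2(X-N)^2+2(Y-N)^2 .
\]
Substituting and dividing by $2$ yields
\[
  \bigl(Q_a-(X-N)^2\bigr)+\bigl(Q_b-(Y-N)^2\bigr)=\tfrac12\bigl(8T_2-K_{N,\eps}\bigr).
\]
The left-hand side is exactly $F_N(\mathbf a)+F_N(\mathbf b)$.

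\emph{Matching \eqref{eq:R-formulas}.} It remains to check that $\tfrac12(8T_2-K_{N,\eps})=R_{N,\eps}$ in each case of \eqref{eq:K-formulas}. Since $8T_2=\tfrac{4N}{3}(2N^2+3N+1)$, in the odd case we get
\[
  \tfrac12\Bigl(\tfrac{4N}{3}(2N^2+3N+1)-\tfrac{2N}{3}(5N^2+1)\Bigr)=\tfrac N3(-N^2+6N+1).
\]
The two even cases differ only in the constant inside $K_{N,\eps}$, namely $+4$ or $-2$ in place of $+1$. These shift the constant in $R_{N,\eps}$ to $-2$ and $+4$ respectively, as claimed.

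The main point needing care is the right-hand-side step. One must justify that the diagonal square sums over $S$ equal the capacity-weighted sums minus exactly one unit at $d_0$ and at $s_0$. This follows from the saturation structure established in the reduction, where each diagonal family has total deficit one. Everything after that is the parallelogram identity and the arithmetic already packaged in \eqref{eq:K-formulas}.
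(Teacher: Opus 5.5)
Your proposal is correct and follows essentially the same route as the paper's proof. Both sum $2x^2+2y^2=(x-y)^2+(x+y)^2$ over $S$ to obtain $K_{N,\eps}-d_0^2-s_0^2=8T_2-2(Q_a+Q_b)$, eliminate $d_0,s_0$ through the first-moment identities (your parallelogram step), and read off $R_{N,\eps}=4T_2-K_{N,\eps}/2$ from \eqref{eq:K-formulas}, and your explicit case-by-case arithmetic is accurate.
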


\begin{proof}
The only absent capacity units occur at $d_0$ and $s_0$. Since
$(x-y)^2+(x+y)^2=2x^2+2y^2$, the second coordinate moments give
\begin{equation}\label{eq:raw-second-moment}
  K_{N,\eps}-d_0^2-s_0^2=8T_2-2(Q_a+Q_b).
\end{equation}
By \eqref{eq:first-moment-identities},
\[
  \frac{d_0^2+s_0^2}{2}=(X-N)^2+(Y-N)^2.
\]
Substituting this into \eqref{eq:raw-second-moment} and rearranging yields
\[
  F_N(\mathbf a)+F_N(\mathbf b)=4T_2-\frac{K_{N,\eps}}2.
\]
The formulas in \eqref{eq:K-formulas} now give \eqref{eq:R-formulas}.
\end{proof}

\subsection{The universal lower bound}

\begin{lemma}\label{lem:universal-lower-bound}
Any triples satisfying \eqref{eq:first-moment-region} obey
\[
  F_N(\mathbf a)+F_N(\mathbf b)\geq\frac{N^2}{3}.
\]
\end{lemma}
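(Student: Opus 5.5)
The plan is to bound each $F_N$ term from below by a function of its sum alone, using Cauchy--Schwarz, and then minimise a concave function over the region \eqref{eq:first-moment-region}.

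\textbf{Step 1: reduce to the sums.} By Cauchy--Schwarz, $\sum_i z_i^2\geq\frac13\bigl(\sum_i z_i\bigr)^2$ for any real triple $\mathbf z$. This gives
\[
  F_N(\mathbf a)\geq g(X),\qquad F_N(\mathbf b)\geq g(Y),\qquad
  g(t)=\frac{t^2}{3}-(t-N)^2 .
\]
No further information about the individual entries $a_i,b_i$ is needed. So it suffices to show $h(X,Y):=g(X)+g(Y)\geq N^2/3$ for every real pair $(X,Y)$ satisfying \eqref{eq:first-moment-region}.

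\textbf{Step 2: concavity.} The coefficient of $t^2$ in $g$ is $\frac13-1=-\frac23<0$, so $g$ is concave. Hence $h$ is concave on $\mathbb R^2$. The region
\[
  \{|X-Y|\leq N,\ 2N\leq X+Y\leq4N\}
\]
is a compact parallelogram. A concave function on a compact polytope attains its minimum at a vertex. The four vertices are obtained by combining $X-Y=\pm N$ with $X+Y\in\{2N,4N\}$:
\[
  \bigl(\tfrac{3N}{2},\tfrac N2\bigr),\quad
  \bigl(\tfrac N2,\tfrac{3N}{2}\bigr),\quad
  \bigl(\tfrac{5N}{2},\tfrac{3N}{2}\bigr),\quad
  \bigl(\tfrac{3N}{2},\tfrac{5N}{2}\bigr).
\]

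\textbf{Step 3: evaluate at the vertices.} Direct computation gives
\[
  g(N/2)=\tfrac{N^2}{12}-\tfrac{N^2}{4}=-\tfrac{N^2}{6},\qquad
  g(3N/2)=\tfrac{3N^2}{4}-\tfrac{N^2}{4}=\tfrac{N^2}{2},\qquad
  g(5N/2)=\tfrac{25N^2}{12}-\tfrac{9N^2}{4}=-\tfrac{N^2}{6}.
\]
At each vertex, one coordinate equals $3N/2$ and the other equals $N/2$ or $5N/2$. So $h=\tfrac{N^2}{2}-\tfrac{N^2}{6}=\tfrac{N^2}{3}$ at every vertex, which proves the lemma.

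\textbf{Main point to watch.} There is no real obstacle; the step needing care is the reduction in Step 1. The concavity argument only works after replacing $\sum z_i^2$ by its Cauchy--Schwarz lower bound, because $F_N$ itself is not a function of the sum alone. That reduction is legitimate because Cauchy--Schwarz holds for arbitrary real triples, so the integrality and range constraints on the $a_i,b_i$ can be dropped. The bound is attained only at the corners of the region, which suggests that the subsequent comparison with $R_{N,\eps}$ is tight enough to need separate treatment for small $N$.
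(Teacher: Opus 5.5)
Your proof is correct and matches the paper's argument. Cauchy--Schwarz reduces each $F_N$ to a concave quadratic in the coordinate sum, and concavity on the parallelogram \eqref{eq:first-moment-region} reduces the bound to the four vertices, each giving $N^2/3$; the only cosmetic difference is that the paper first normalises to $u=X/N$, $v=Y/N$.
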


\begin{proof}
By Cauchy--Schwarz, $Q_a\geq X^2/3$, and therefore
\[
  F_N(\mathbf a)\geq\frac{X^2}{3}-(X-N)^2
  =N^2 f(X/N),
\]
where
\[
  f(t)=\frac{t^2}{3}-(t-1)^2=-\frac23t^2+2t-1.
\]
The same estimate holds for $\mathbf b$. Set $u=X/N$ and $v=Y/N$. By
\eqref{eq:first-moment-region}, $(u,v)$ belongs to the parallelogram
\[
  P=\{(u,v):2\leq u+v\leq4,\ |u-v|\leq1\}.
\]
Its vertices are
\[
  (1/2,3/2),\ (3/2,1/2),\ (3/2,5/2),\ (5/2,3/2).
\]
The function $(u,v)\mapsto f(u)+f(v)$ is concave. Since every point of $P$
is a convex combination of its vertices, concavity bounds its value below by
the minimum of its values at those vertices. Each vertex gives $1/3$.
Consequently,
\[
  F_N(\mathbf a)+F_N(\mathbf b)
  \geq N^2(f(u)+f(v))\geq\frac{N^2}{3}.
\]
\end{proof}

\subsection{Contradiction for \texorpdfstring{$n\geq7$}{n at least 7}
and the integer case \texorpdfstring{$n=6$}{n = 6}}

\begin{proof}[Proof of the main finite theorem]
Suppose first that $n$ is odd. Then $N$ is even and $N\geq6$. The larger of
the two values in \eqref{eq:R-formulas} is the one for $\eps=1$, and
\[
  \frac{N(-N^2+6N+4)}3<\frac{N^2}{3}
\]
is equivalent to $N^2-5N-4>0$, which holds for every even $N\geq6$.
If $n$ is even and $n\geq8$, then $N$ is odd and $N\geq7$; in this case the
required strict inequality is equivalent to $N^2-5N-1>0$. Thus, for every
$n\geq7$ and either colour, \cref{lem:moment-invariant} contradicts
\cref{lem:universal-lower-bound}.

It remains to treat $n=6$, so $N=5$. Here both colours satisfy
$R_{5,\eps}=10$. A deficit index cannot occur three times. For each possible
sum $Z$, let $\mu(Z)$ be the minimum of $F_5$ over triples in
$\{0,\ldots,5\}$ with multiplicity at most two.  The table lists
$\mu(Z)$ and one sorted minimiser; for example, $001$ denotes $(0,0,1)$.
\begin{equation}\label{eq:n6-mu-table}
\begin{array}{c|rrrrrrrrrrrrrr}
Z&1&2&3&4&5&6&7&8&9&10&11&12&13&14\\
\hline
\mu(Z)&-15&-7&1&5&9&13&13&13&13&9&5&1&-7&-15\\
\text{triple}&001&011&012&112&122&123&223&233&234&334&344&345&445&455
\end{array}
\end{equation}
For fixed $Z$, the sum of squares is minimised by making the three entries as
balanced as possible. If the balanced triple is constant, and hence
forbidden, the next minimum is obtained by replacing $(k,k,k)$ with
$(k-1,k,k+1)$. This proves the table.

The first-moment constraints become
\[
  10\leq X+Y\leq20,
  \qquad |X-Y|\leq5.
\]
If necessary, complement both triples by $z\mapsto5-z$, and then interchange
them. Complementation preserves each $F_5$ value and sends $X+Y$ to
$30-(X+Y)$. We may therefore assume $X\leq Y$ and
$10\leq X+Y\leq15$. The only possibilities are
\[
\begin{array}{c|ccccc}
X&3&4&5&6&7\\
\hline
Y&7,8&6,7,8,9&5,\ldots,10&6,7,8,9&7,8.
\end{array}
\]
By \eqref{eq:n6-mu-table}, the smallest possible value of
$\mu(X)+\mu(Y)$ is $14$, contradicting the required value
$R_{5,\eps}=10$.

We have therefore ruled out a set of size $2N-1$ for every $N\geq5$. Any set
of size $2N$ would produce one of size $2N-1$ after deletion of a point.
Hence every four-direction admissible set has size at most
$2N-2=2n-4$, proving \cref{thm:main-intro}.
\end{proof}

\begin{corollary}
For both colours,
\[
  \Dmono(6,0)=\Dmono(6,1)=8.
\]
\end{corollary}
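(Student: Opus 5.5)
Since $\Dmono(6,\eps)\leq\Mfour(6,\eps)\leq2\cdot6-4=8$ by \cref{thm:main-intro}, the corollary reduces to a lower bound. We must exhibit, for each colour $\eps\in\{0,1\}$, an explicit $8$-point no-three-in-line subset of $\C_\eps$ in the $6\times6$ grid. For $n=6$ the number $n$ is even, so by the symmetry remark in \cref{sec:setting} a reflection (for instance $x\mapsto N-x$, which adds $N=5$ to $x+y$ modulo two) interchanges the two colour classes. It also preserves collinearity, so $\Dmono(6,0)=\Dmono(6,1)$ and one construction suffices.

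To build the colour-one set, I would start from the $n=5$ example and try to enlarge it to the $6\times6$ board. The natural first attempt places two points in most rows and columns near the boundary in a symmetric pattern, invariant under the half-turn or a diagonal reflection. This symmetry keeps the verification short. A concrete candidate to test is
\[
\{(0,1),(0,3),(1,0),(1,4),(2,5),(3,0),(4,5),(5,2)\}
\]
with every coordinate sum odd. I would adjust it if some three points turn out collinear.

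The verification has two parts. First, check four-direction admissibility: at most two points in each row, column and $\pm1$ diagonal. Then check every other slope that can carry three lattice points of a $6\times6$ grid, namely $\pm2,\pm1/2,\pm3,\pm1/3,\pm2/3,\pm3/2$ and a few more. With $\binom83=56$ triples, a direct determinant check over all triples is routine. Symmetry cuts this down further.

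The main obstacle is simply finding a valid configuration, not proving anything. If the candidate fails, I would fall back on the ancillary certificates for $\Dmono(6,\eps)$ mentioned in the introduction, or run a short exhaustive search. Once an $8$-point set is certified, the equality $\Dmono(6,0)=\Dmono(6,1)=8$ follows.
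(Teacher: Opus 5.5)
Your reduction is sound. The upper bound $\Dmono(6,\eps)\leq\Mfour(6,\eps)\leq8$ comes from \cref{thm:main-intro}. Since $N=5$ is odd, the reflection $x\mapsto5-x$ maps $\C_0$ bijectively onto $\C_1$ and preserves collinearity, so a single witness would suffice; the paper simply lists one witness per colour.

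\textbf{The gap.} After this reduction, the only content left in the corollary is an explicit, verified $8$-point no-three-in-line set, and you do not supply one. Your candidate already fails four-direction admissibility: $(0,3)$, $(1,4)$ and $(2,5)$ all lie on the difference diagonal $x-y=-3$, that is, on the line $y=x+3$. Saying you would ``adjust it'', or fall back on the ancillary certificates or an exhaustive search, does not prove anything. The existence of such a configuration is exactly what has to be established.

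\textbf{How to close it.} Exhibit a genuine witness and check it. For example, the colour-one set $\{(0,1),(0,5),(2,1),(3,0),(4,3),(4,5),(5,0),(5,2)\}$ works. So does the image under $x\mapsto5-x$ of the colour-zero set $\{(0,2),(0,4),(1,1),(1,5),(4,0),(4,2),(5,3),(5,5)\}$. In either case, verify that none of the $56$ triple determinants vanishes.

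\textbf{A smaller point.} Your list of slopes to check is off. Three lattice points on a line with primitive direction $(p,q)$ span at least $2|p|$ horizontally and $2|q|$ vertically, and both spans must be at most $5$ in a $6\times6$ grid. So only the slopes $0,\infty,\pm1,\pm2,\pm\tfrac12$ can carry three points; slopes such as $\pm3$, $\pm\tfrac13$ or $\pm\tfrac23$ cannot. This does not affect correctness if you test all triples, but it shortens the verification considerably.
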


\begin{proof}
The upper bound follows from \cref{thm:main-intro}. Exact lower witnesses are
\[
\begin{split}
S_0=\{&(0,2),(0,4),(1,1),(1,5),\\
      &(4,0),(4,2),(5,3),(5,5)\},
\end{split}
\]
and
\[
\begin{split}
S_1=\{&(0,1),(0,5),(2,1),(3,0),\\
      &(4,3),(4,5),(5,0),(5,2)\}.
\end{split}
\]
Every point of $S_\eps$ has colour $\eps$. For each set, direct evaluation of
the integer determinant
\[
  (x_2-x_1)(y_3-y_1)-(y_2-y_1)(x_3-x_1)
\]
on all $\binom83=56$ triples yields no zero.
\end{proof}

\section{Multiple deficits and linear bounds}\label{sec:multiple-deficits}

The preceding proof uses the most rigid near-saturated situation: each
diagonal capacity profile is short by one unit. The underlying identity does
not depend on that restriction. When several units are missing, centring the
coordinates puts the same calculation into a cleaner form involving four
deficit multisets. We draw two consequences. The first forces any prescribed
fixed deficit once $n$ is sufficiently large. The second records where the
deficits occur and, using only second moments, forces a deficit linear in
$n$.

\subsection{A centred deficit identity}

Assume $n\geq4$ and again put $N=n-1$. For a four-direction admissible set
$S\subseteq\C_\eps$, write
\begin{equation}\label{eq:multi-size}
  |S|=2N-r.
\end{equation}
By \cref{lem:capacity}, $0\leq r\leq2N$.

For each column, place its index in a multiset $A$ once for every unused unit
of its nominal capacity two. Define the row-deficit multiset $B$ in the same
way. There are $2(N+1)$ nominal slots in the rows and the same number in the
columns, so
\begin{equation}\label{eq:multi-AB-size}
  |A|=|B|=r+2.
\end{equation}

For the diagonal families, use the centred labels
\[
  u=x-y,
  \qquad
  v=x+y-N.
\]
Both range from $-N$ to $N$. Let $D$ and $E$ denote the multisets of unused
capacity units in the $u$- and $v$-families, respectively, with each centred
label repeated according to the amount of missing capacity there. Since each
diagonal family has total capacity $2N$,
\begin{equation}\label{eq:multi-DE-size}
  |D|=|E|=r.
\end{equation}

Write
\begin{equation}\label{eq:multi-T}
  T_N=\sum_{j=0}^{N}\left(j-\frac N2\right)^2
      =\frac{N(N+1)(N+2)}{12}.
\end{equation}
A centred diagonal family has one of only two possible capacity-weighted
square moments. If its compatible labels include $\pm N$, then
\begin{equation}\label{eq:multi-Mplus}
  M_+(N)=2\sum_{j=0}^{N}(2j-N)^2-2N^2
        =\frac{2N(N^2+2)}3.
\end{equation}
If the endpoints are incompatible, then
\begin{equation}\label{eq:multi-Mminus}
  M_-(N)=2\sum_{j=0}^{N-1}(2j-(N-1))^2
        =\frac{2N(N^2-1)}3.
\end{equation}
The $u$-family uses $M_+(N)$ exactly when $\eps\equiv N\pmod2$, while the
$v$-family uses $M_+(N)$ exactly when $\eps=0$.

\begin{proposition}[Centred deficit identity]\label{prop:multi-identity}
For every four-direction admissible $S$ satisfying
\eqref{eq:multi-size},
\begin{align}
 &\frac12\left(\sum_{d\in D}d^2+\sum_{e\in E}e^2\right)
 -\sum_{a\in A}\left(a-\frac N2\right)^2
 -\sum_{b\in B}\left(b-\frac N2\right)^2
 \notag\\*
 &\hspace{42mm}=\Gamma_{N,\eps},              \label{eq:multi-identity}
\end{align}
where
\begin{equation}\label{eq:multi-Gamma}
\Gamma_{N,\eps}=
\begin{cases}
\displaystyle \frac{N(N-1)(N-2)}3,
 &N\text{ even and }\eps=0,\\[6pt]
\displaystyle \frac{N(N-4)(N+1)}3,
 &N\text{ even and }\eps=1,\\[6pt]
\displaystyle \frac{N(N^2-3N-1)}3,
 &N\text{ odd}.
\end{cases}
\end{equation}
\end{proposition}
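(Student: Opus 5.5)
The plan is to compute the single quantity
\[
\Sigma=\sum_{(x,y)\in S}\Bigl[\bigl(x-\tfrac N2\bigr)^2+\bigl(y-\tfrac N2\bigr)^2\Bigr]
\]
in two ways, once through the row and column families and once through the two diagonal families. The bridge between the two computations is the pointwise identity
\[
2\bigl(x-\tfrac N2\bigr)^2+2\bigl(y-\tfrac N2\bigr)^2=(x-y)^2+(x+y-N)^2=u^2+v^2.
\]
This is the centred form of the degree-two relation in \cref{rem:tomography}. Because the labels are centred, no first-moment cross terms appear, so there is no analogue of \eqref{eq:first-moment-identities} to carry along. This is what makes the multiset version cleaner than \cref{lem:moment-invariant}.

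\emph{Column side.} Every column has nominal capacity two. Subtracting the unused units recorded in $A$ gives
\[
\sum_{S}\bigl(x-\tfrac N2\bigr)^2=2T_N-\sum_{a\in A}\bigl(a-\tfrac N2\bigr)^2.
\]
The analogous identity holds for $y$ with the row-deficit multiset $B$. Here $T_N$ is \eqref{eq:multi-T}, which I would check directly: it equals $T_2-N^2(N+1)/4$.

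\emph{Diagonal side.} Each $u$-diagonal is filled to its capacity minus its multiplicity in $D$. Hence
\[
\sum_S u^2=M_u-\sum_{d\in D}d^2,
\]
where $M_u\in\{M_+(N),M_-(N)\}$ is chosen according to the stated parity rule. The same holds for $v$ with the multiset $E$ and the constant $M_v$. The rule has to be verified. The $u$-labels compatible with $\C_\eps$ are $d\equiv\eps\pmod 2$, and they include $\pm N$ iff $\eps\equiv N\pmod 2$. The $v$-labels are $s-N$ with $s\equiv\eps$, and they include $\pm N$, that is $s=0$ or $s=2N$, iff $\eps=0$. The closed forms \eqref{eq:multi-Mplus} and \eqref{eq:multi-Mminus} follow from $\sum_{j=0}^{m}(2j-m)^2=m(m+1)(m+2)/3$.

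\emph{Equating.} Combining the two sides through the pointwise identity gives
\[
4T_N-2\sum_A\bigl(a-\tfrac N2\bigr)^2-2\sum_B\bigl(b-\tfrac N2\bigr)^2=M_u+M_v-\sum_D d^2-\sum_E e^2.
\]
Dividing by two and rearranging yields \eqref{eq:multi-identity} with
\[
\Gamma_{N,\eps}=\tfrac12(M_u+M_v)-2T_N.
\]
It then remains to substitute the parity cases.
\begin{itemize}
\item For $N$ even and $\eps=0$, both families use $M_+$. This gives $\tfrac{2N(N^2+2)}3-\tfrac{N(N+1)(N+2)}6$.
\item For $N$ even and $\eps=1$, both families use $M_-$.
\item For $N$ odd, exactly one family uses $M_+$ for either colour.
\end{itemize}

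The only real obstacle is bookkeeping. One must get the compatibility rule for which family receives $M_+$, including the singleton corners, and then simplify the three cubic expressions correctly to match \eqref{eq:multi-Gamma}. In the first case the simplification reads
\[
\frac{2N(N^2+2)}{3}-\frac{N(N+1)(N+2)}{6}\cdot 2\cdot\frac12=\frac{N(N-1)(N-2)}3,
\]
and I would confirm the other two cases in the same way. As a cross-check on the constants, I would compare with the single-deficit case against \cref{lem:moment-invariant}.
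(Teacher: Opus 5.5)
Your route is the paper's own: the same centred pointwise identity, the same slot-counting $\sum_S(x-\tfrac N2)^2=2T_N-\sum_{a\in A}(a-\tfrac N2)^2$ on the coordinate side and $\sum_S u^2=M_u-\sum_{d\in D}d^2$ on the diagonal side, and the same $M_\pm$ case split. Your compatibility rule and your closed forms for $T_N$ and $M_\pm$ are correct. The constant you derive is wrong, however, and it fails exactly in the bookkeeping you identified as the obstacle. Since $\Sigma=4T_N-\sum_{a\in A}(a-\tfrac N2)^2-\sum_{b\in B}(b-\tfrac N2)^2$ and $\sum_S(u^2+v^2)=2\Sigma$, the equated display must read
\[
8T_N-2\sum_{a\in A}\bigl(a-\tfrac N2\bigr)^2-2\sum_{b\in B}\bigl(b-\tfrac N2\bigr)^2=M_u+M_v-\sum_{d\in D}d^2-\sum_{e\in E}e^2 .
\]
You doubled the deficit sums but not $4T_N$. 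Hence $\Gamma_{N,\eps}=\tfrac12(M_u+M_v)-4T_N$, as in the paper, not $\tfrac12(M_u+M_v)-2T_N$.

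Your first-case ``simplification'' is therefore a false equation. In fact $\tfrac{2N(N^2+2)}3-\tfrac{N(N+1)(N+2)}6=\tfrac{N(N^2-N+2)}2$, which equals $28$ at $N=4$, whereas $\tfrac{N(N-1)(N-2)}3=8$. A concrete test exposes the error. The paper's $n=5$, colour-one example has $r=0$ and $A=B=\{2,2\}$, so the left side of the identity is $0=\Gamma_{4,1}$. Your constant instead gives $M_-(4)-2T_4=40-20=20$.

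With the factor restored, $4T_N=\tfrac{N(N+1)(N+2)}3$ and the three cases follow at once: $M_+-4T_N=\tfrac{N(N-1)(N-2)}3$, $M_--4T_N=\tfrac{N(N+1)(N-4)}3$, and $\tfrac12(M_++M_-)-4T_N=\tfrac{N(N^2-3N-1)}3$. This is \eqref{eq:multi-Gamma}. The cross-check against \cref{lem:moment-invariant} that you proposed would also have caught the slip.
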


\begin{proof}
For every point $(x,y)$,
\begin{equation}\label{eq:multi-point-identity}
  (x-y)^2+(x+y-N)^2
  =2\left(x-\frac N2\right)^2
   +2\left(y-\frac N2\right)^2.
\end{equation}
Let $M_u$ and $M_v$ be the full capacity-weighted square moments of the two
centred diagonal families. Removing the diagonal deficits gives
\[
  \sum_{(x,y)\in S}(u^2+v^2)
  =M_u+M_v-\sum_{d\in D}d^2-\sum_{e\in E}e^2.
\]
On the coordinate side,
\[
  \sum_{(x,y)\in S}\left(x-\frac N2\right)^2
  =2T_N-\sum_{a\in A}\left(a-\frac N2\right)^2,
\]
and the same formula holds with $x,A$ replaced by $y,B$. Summing
\eqref{eq:multi-point-identity} over $S$ and rearranging yields
\eqref{eq:multi-identity}, where
\[
  \Gamma_{N,\eps}=\frac{M_u+M_v}{2}-4T_N.
\]
Substituting \eqref{eq:multi-T}-\eqref{eq:multi-Mminus}, according to which
families contain the endpoint labels, gives \eqref{eq:multi-Gamma}.
\end{proof}

When $r=1$, this is precisely the second-moment obstruction of
\cref{sec:moment-proof}, expressed in centred coordinates. For general $r$,
we combine the identity with inequalities that use the restricted locations
of the deficit units.

\subsection{Fixed deficits}

The first consequence requires almost no information about those locations.
The coordinate terms in \eqref{eq:multi-identity} are nonnegative, while
every centred diagonal label has absolute value at most $N$. Therefore
\begin{equation}\label{eq:multi-crude}
  \Gamma_{N,\eps}\leq rN^2.
\end{equation}
This crude estimate already forces every fixed deficit.

\begin{theorem}[Fixed deficits]\label{thm:multi-fixed}
Let $d\geq4$ be an integer. If $n\geq3d-4$, then
\[
  \Mfour(n,\eps)\leq2n-d
\]
for either colour $\eps$. Consequently,
\[
  \Dmono(n,\eps)\leq2n-d.
\]
\end{theorem}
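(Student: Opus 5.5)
The plan is to argue by contradiction using only the crude consequence \eqref{eq:multi-crude} of \cref{prop:multi-identity}. Suppose $S\subseteq\C_\eps$ is four-direction admissible with $|S|\geq 2n-d+1$. In the notation \eqref{eq:multi-size}, with $N=n-1$, this means $|S|=2N-r$ with $0\leq r\leq d-3$. The hypotheses $d\geq4$ and $n\geq3d-4$ give $n\geq8$, so the standing assumption $n\geq4$ of the section holds and \cref{prop:multi-identity} applies. Discarding points is unnecessary, because \eqref{eq:multi-crude} is monotone in $r$. We therefore obtain
\[
  \Gamma_{N,\eps}\leq rN^2\leq (d-3)N^2 .
\]

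Next I convert the hypothesis on $n$ into a bound on $d-3$ in terms of $N$. From $n\geq3d-4$ we get $N\geq3d-5$, that is, $d-3\leq (N-4)/3$. Hence it suffices to show
\[
  \Gamma_{N,\eps}>\frac{N^2(N-4)}{3}=\frac{N(N^2-4N)}{3}
\]
in each case of \eqref{eq:multi-Gamma}. This is a direct comparison of cubics.
\begin{itemize}
\item For $N$ even and $\eps=0$, the difference is $N\bigl((N^2-3N+2)-(N^2-4N)\bigr)/3=N(N+2)/3>0$.
\item For $N$ even and $\eps=1$, the difference is $N\bigl((N^2-3N-4)-(N^2-4N)\bigr)/3=N(N-4)/3$, which is positive since $N\geq7$.
\item For $N$ odd, the difference is $N(N-1)/3>0$.
\end{itemize}
In every case we reach a contradiction, so $|S|\leq 2n-d$. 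This gives $\Mfour(n,\eps)\leq 2n-d$. The bound for $\Dmono(n,\eps)$ then follows from $\Dmono\leq\Mfour$.

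The only step needing care is the even-$N$, $\eps=1$ case, since there the margin $N(N-4)/3$ vanishes at $N=4$. I will check that the hypothesis $n\geq3d-4$ with $d\geq4$ really forces $N\geq7$. This is also where the threshold $3d-4$ is essentially dictated: it is exactly what makes $(d-3)N^2$ fall below the leading $N^3/3$ behaviour of $\Gamma_{N,\eps}$, with the lower-order terms of each case absorbed. Everything else is routine substitution into the already established identity.
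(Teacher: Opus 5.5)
Your proof is correct and is essentially the paper's argument: both combine the crude consequence $\Gamma_{N,\eps}\leq rN^2$ of \cref{prop:multi-identity} with $N\geq3d-5$. The differences are cosmetic. The paper first deletes points to make $r=d-3$ exactly, and it replaces your three-case comparison with the uniform lower bound $\Gamma_{N,\eps}\geq N(N-4)(N+1)/3$, which gives the same worst-case margin $N(N-4)/3$.
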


\begin{proof}
Suppose, to the contrary, that $|S|\geq2n-d+1$. Deleting points preserves
four-direction admissibility, so we may assume equality. Since $n=N+1$,
\eqref{eq:multi-size} gives
\[
  r=d-3.
\]
Uniformly over the parity of $N$ and the colour $\eps$,
\eqref{eq:multi-Gamma} yields
\begin{equation}\label{eq:multi-Gamma-lower}
  \Gamma_{N,\eps}\geq\frac{N(N-4)(N+1)}3
  =\frac{N^3}{3}-N^2-\frac{4N}{3}.
\end{equation}
The hypothesis $n\geq3d-4$ is equivalent to $N\geq3d-5$. Hence
\begin{align*}
  \Gamma_{N,\eps}-(d-3)N^2
  &\geq\frac N3\bigl(N(N-3d+6)-4\bigr)\\
  &\geq\frac{N(N-4)}3>0.
\end{align*}
This contradicts \eqref{eq:multi-crude}.
\end{proof}

\begin{corollary}\label{cor:multi-third}
For every $n\geq8$ and either colour,
\[
  \Mfour(n,\eps)\leq
  2n-\left\lfloor\frac{n+4}{3}\right\rfloor,
\]
and the same bound holds for $\Dmono(n,\eps)$.
\end{corollary}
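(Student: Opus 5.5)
We obtain the corollary from \cref{thm:multi-fixed} by taking the largest admissible deficit. For $n\geq8$ put
\[
  d=\left\lfloor\frac{n+4}{3}\right\rfloor .
\]
It then suffices to check the two hypotheses of \cref{thm:multi-fixed}: that $d\geq4$ is an integer, and that $n\geq3d-4$.

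For the first, $d$ is an integer by definition, and $n\geq8$ gives $(n+4)/3\geq4$, so $d\geq4$. This is the only place the threshold $n\geq8$ enters. For $n=7$ the formula would give $d=3$, outside the range of the theorem, and \cref{thm:main-intro} would be needed instead.

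For the second, $d\leq(n+4)/3$ gives $3d\leq n+4$, that is, $n\geq3d-4$. Thus \cref{thm:multi-fixed} applies and yields
\[
  \Mfour(n,\eps)\leq2n-d=2n-\left\lfloor\frac{n+4}{3}\right\rfloor
\]
for either colour. The bound for $\Dmono(n,\eps)$ follows from $\Dmono(n,\eps)\leq\Mfour(n,\eps)$, or directly from the second assertion of \cref{thm:multi-fixed}.

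There is no real obstacle here. The one point to get right is the choice of $d$: the floor must be chosen so that $3d-4\leq n$ holds exactly, and $(n+4)/3$ is precisely the solution of $n=3d-4$. Rounding down therefore gives the best fixed-deficit bound that \cref{thm:multi-fixed} allows.
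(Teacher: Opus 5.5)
Your proposal is correct and matches the paper's proof: both take $d=\lfloor (n+4)/3\rfloor$, check that $d\geq4$ when $n\geq8$ and that $3d-4\leq n$, and then apply \cref{thm:multi-fixed}. Your remark that this $d$ is the largest one the theorem permits is also accurate.
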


\begin{proof}
Take $d=\lfloor(n+4)/3\rfloor$. Then $d\geq4$ and $3d-4\leq n$, so
\cref{thm:multi-fixed} applies.
\end{proof}

The threshold $3d-4$ is chosen for a uniform statement. Keeping the parity
and colour cases in \eqref{eq:multi-Gamma} separate gives slightly sharper
finite thresholds, but the simpler form is more useful for the present
purpose.

\paragraph{Sharpness.}
The bound in \cref{thm:main-intro} is attained for $n=6,7,8$ in both
colours and for $n=9$ in colour zero. The fixed-deficit bound in
\cref{thm:multi-fixed} is also attained at $(n,d)=(11,5)$ in colour zero
and at $(n,d)=(14,6)$ in both colours:
\[
  \Mfour(11,0)=17,\qquad
  \Mfour(14,0)=\Mfour(14,1)=22.
\]
The ancillary files give coordinate witnesses and exact checks of their
four-direction admissibility. The order-11 and order-14 witnesses contain
collinear triples in other directions, so these equalities concern
$\Mfour$, rather than $\Dmono$.

\subsection{A linear deficit from the deficit positions}

The estimate \eqref{eq:multi-crude} loses information in two places: it
places every missing diagonal unit as far from the centre as possible, and it
discards the contribution of the row and column deficits altogether. Both
losses can be controlled by ordering the available positions.

Let $P_N(k)$ be the sum of the $k$ smallest elements of the multiset
\begin{equation}\label{eq:multi-P-multiset}
  \left\{
  \left(j-\frac N2\right)^2,
  \left(j-\frac N2\right)^2:0\leq j\leq N
  \right\}.
\end{equation}
For $\sigma\in\{u,v\}$, let $Q_\sigma(N,r)$ be the sum of the $r$ largest
squared labels in the full capacity multiset of the corresponding diagonal
family. The centred identity then gives
\begin{equation}\label{eq:multi-rearrangement}
  \Gamma_{N,\eps}
  \leq\frac{Q_u(N,r)+Q_v(N,r)}2-2P_N(r+2).
\end{equation}

\begin{lemma}[Order-statistic bounds]\label{lem:multi-orders}
For every integer $r$ with $0\leq r\leq2N$ and either diagonal family,
\begin{align}
  Q_\sigma(N,r)
  &\leq I_N(r)+4N^2,
  & I_N(r)&=N^2r-\frac{Nr^2}{2}+\frac{r^3}{12},
                                                        \label{eq:multi-Q-bound}\\
  P_N(r+2)
  &\geq\frac{r(r+1)(2r+1)}{96}
   \geq\frac{r^3}{48}.                                \label{eq:multi-P-bound}
\end{align}
The estimates are uniform in $N,r,\eps$ and in the choice of diagonal family.
\end{lemma}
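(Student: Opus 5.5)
For the bound on $Q_\sigma(N,r)$, I will write the full capacity multiset of a centred diagonal family explicitly and compare it with an integral. In either case ($M_+$ or $M_-$), the labels are $\{-N,-N+2,\ldots,N\}$ or $\{-N+1,\ldots,N-1\}$, stepping by two, with multiplicity two except possibly at the singleton endpoints $\pm N$. So the squared labels, sorted decreasingly, take each value $m^2$ for $m=N,N-2,N-4,\ldots$ (or $N-1,N-3,\ldots$) at most four times. The $k$-th largest squared label, for $k\geq1$, is therefore at most $(N-2\lfloor (k-1)/4\rfloor)^2$; the slightly finer bound I need is that the $k$-th largest is at most $(N-(k-4)/2)^2$ whenever $k\geq 4$. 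This holds because the $k$-th entry lies in block $\lfloor(k-1)/4\rfloor$, whose label is at most $N-2\lfloor(k-1)/4\rfloor\leq N-(k-4)/2$, a nonnegative quantity for $k\leq 2N$.

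Bounding the first four entries trivially by $N^2$ each gives the extra $4N^2$. For the remaining terms, I will compare the sum of $(N-(k-4)/2)^2$ over $k=5,\ldots,r$, which equals the sum of $(N-j/2)^2$ over $j=1,\ldots,r-4$, with the integral
\[
  \int_0^r (N-t/2)^2\,dt=N^2r-\frac{Nr^2}{2}+\frac{r^3}{12}=I_N(r).
\]
Since $t\mapsto(N-t/2)^2$ is decreasing on $[0,2N]$, each summand $(N-j/2)^2$ is at most $\int_{j-1}^{j}(N-t/2)^2\,dt$, so the tail is at most $\int_0^{r-4}\leq I_N(r)$. This yields \eqref{eq:multi-Q-bound}, and for $r\leq4$ the bound $rN^2\leq4N^2$ already suffices.

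For $P_N(r+2)$, the multiset \eqref{eq:multi-P-multiset} consists of the values $(j-N/2)^2$, each taken twice. Around the centre, the distances $|j-N/2|$ are $0,1,1,2,2,\ldots$ for $N$ even and $\tfrac12,\tfrac12,\tfrac32,\tfrac32,\ldots$ for $N$ odd. With the doubling, each distance appears two or four times. In either case the $k$-th smallest distance (for $k\geq1$) is at least $\lfloor (k-1)/4\rfloor\cdot 1$; more usefully, it is at least $(k-3)/4$ when $k\geq3$. I will check this bound case by case, which is the fiddly step.

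Assuming it, I obtain
\[
  P_N(r+2)\geq\sum_{k=3}^{r+2}\Bigl(\frac{k-3}{4}\Bigr)^2=\frac1{16}\sum_{i=0}^{r-1}i^2=\frac{(r-1)r(2r-1)}{96}.
\]
This is weaker than the stated $r(r+1)(2r+1)/96$, so I will tighten it. The sharper claim is that the $k$-th smallest distance is at least $(k-2)/4$ for $k\geq2$, or, after dropping the two smallest entries, that the $(i+2)$-th is at least $i/4$. Then
\[
  \sum_{i=1}^{r}(i/4)^2=\frac{r(r+1)(2r+1)}{96}.
\]
I will verify that claim from the explicit lists. For $N$ even, the distances are $0,0,1,1,1,1,2,2,2,2,\ldots$, so the $(i+2)$-th is $\lceil i/4\rceil\geq i/4$. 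For $N$ odd, the distances are $\tfrac12\times4,\tfrac32\times4,\ldots$; here the $(i+2)$-th is at least $\tfrac12+\lfloor (i+1)/4\rfloor$, and I will confirm this is $\geq i/4$ for all $i\geq1$. Finally, $r(r+1)(2r+1)\geq 2r^3$ gives the last inequality $P_N(r+2)\geq r^3/48$. The main obstacle is exactly these index bookkeeping checks near the endpoints of the lists, and for $Q_\sigma$ the case $r>2N-4$, where labels reach the boundary and the count of entries per block changes. There I rely on the monotonicity of the integrand up to $t=2N$, which remains valid.
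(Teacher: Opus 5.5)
Your proposal is correct and is essentially the paper's argument. Your block enumeration gives exactly the paper's order-statistic bounds $w_j\leq N-(j-4)/2$ for $j\geq5$ and $z_{i+2}\geq i/4$, followed by the same decreasing-integrand comparison with $\int_0^r(N-x/2)^2\,dx$ and the same sum $\frac1{16}\sum_{i=1}^r i^2$; the paper just phrases the enumeration as a counting bound (at most $2(N-t)+4$ units of absolute label at least $t$, and at most $4t+2$ units within distance $t$ of $N/2$). Two small touch-ups: the $k$-th entry lies in a block of index at least $\lfloor(k-1)/4\rfloor$, since blocks may have fewer than four entries, which only helps, so the case $r>2N-4$ needs no separate treatment; and the deferred odd-$N$ check is just $\lfloor(i+1)/4\rfloor\geq(i-2)/4$.
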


\begin{proof}
Order the absolute values in a diagonal capacity multiset as
\[
  w_1\geq w_2\geq\cdots\geq w_{2N}.
\]
For $0\leq t\leq N$, there are at most $2(N-t)+4$ capacity units with
absolute label at least $t$.  Taking $t=w_j$ shows that
$j\leq2(N-w_j)+4$, and hence
\[
  w_j\leq N-\frac{j-4}{2}\qquad(5\leq j\leq2N).
\]
Therefore the four largest terms contribute at most $4N^2$.  Since
$x\mapsto(N-x/2)^2$ is decreasing on $[0,2N]$, each term of the remaining
sum is bounded by the integral over the preceding unit interval.  Thus
\begin{align*}
  Q_\sigma(N,r)
  &\leq4N^2+
  \sum_{j=1}^{\max\{r-4,0\}}\left(N-\frac j2\right)^2\\
  &\leq4N^2+\int_0^r\left(N-\frac x2\right)^2\,dx
   =4N^2+I_N(r).
\end{align*}

Next order the absolute centred coordinate offsets, with multiplicity two,
as
\[
  0\leq z_1\leq z_2\leq\cdots\leq z_{2N+2}.
\]
An interval of radius $t$ about $N/2$ contains at most $2t+1$ integer
indices, hence at most $4t+2$ capacity units.  It follows that
\[
  z_{j+2}\geq\frac j4\qquad(1\leq j\leq2N).
\]
Therefore
\[
  P_N(r+2)\geq\frac1{16}\sum_{j=1}^{r}j^2
  =\frac{r(r+1)(2r+1)}{96},
\]
which proves \eqref{eq:multi-P-bound}.
\end{proof}

\begin{theorem}[A linear deficit]\label{thm:multi-linear}
For every $n\geq4$, either colour $\eps$, and every four-direction admissible
$S\subseteq\C_\eps$,
\begin{equation}\label{eq:multi-linear-finite}
  |S|\leq(\sqrt{21}-3)n+8.
\end{equation}
Consequently,
\[
  \limsup_{n\to\infty}\frac{\Mfour(n,\eps)}n
  \leq\sqrt{21}-3=1.5825756949\ldots,
\]
and the same statement holds for $\Dmono(n,\eps)$.
\end{theorem}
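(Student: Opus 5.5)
The plan is to combine the centred identity of \cref{prop:multi-identity} in its rearranged form \eqref{eq:multi-rearrangement} with the order-statistic estimates of \cref{lem:multi-orders}, and to read off a cubic inequality in $r=2N-|S|$. The parameter $r$ is then forced to be at least a fixed multiple of $N$.

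\textbf{Step 1: a cubic inequality in $r$.} For $|S|=2N-r$, insert \eqref{eq:multi-Q-bound} for both diagonal families and \eqref{eq:multi-P-bound} into \eqref{eq:multi-rearrangement}:
\[
  \Gamma_{N,\eps}\leq I_N(r)+4N^2-\frac{r^3}{24}
  =N^2r-\frac{Nr^2}{2}+\frac{r^3}{24}+4N^2 .
\]
On the left use the uniform lower bound \eqref{eq:multi-Gamma-lower}, namely $\Gamma_{N,\eps}\geq N^3/3-N^2-4N/3$. This gives
\[
  h(r):=\frac{r^3}{24}-\frac{Nr^2}{2}+N^2r-\frac{N^3}{3}\geq-5N^2-\frac{4N}{3}.
\]

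\textbf{Step 2: the root of the cubic.} The polynomial factors as
\[
  24h(r)=(r-2N)\bigl(r^2-10Nr+4N^2\bigr),
\]
with roots $r=(5-\sqrt{21})N$, $r=2N$ and $r=(5+\sqrt{21})N$. Put $r_0=(5-\sqrt{21})N$. Then $2N-r_0=(\sqrt{21}-3)N$, which is exactly the coefficient claimed in \eqref{eq:multi-linear-finite}.

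\textbf{Step 3: locating $r$ near $r_0$.} Since $h''(r)=r/4-N<0$ for $r<4N$, the cubic $h$ is concave on the whole admissible range $0\leq r\leq 2N$. Hence $h$ lies below its tangent at $r_0$:
\[
  h(r)\leq h'(r_0)(r-r_0).
\]
A direct evaluation gives
\[
  h'(r_0)=N^2\Bigl(\tfrac{\rho_0^2}{8}-\rho_0+1\Bigr)=c_0N^2,
  \qquad \rho_0=5-\sqrt{21},\quad c_0\approx0.605>0.
\]
If $r<r_0$, Step 1 then yields
\[
  r_0-r\leq\frac{5N^2+4N/3}{c_0N^2}\leq\frac{5+4/(3N)}{c_0}.
\]
If $r\geq r_0$ there is nothing to prove.

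\textbf{Step 4: conversion to $n$.} It follows that
\[
  |S|=2N-r\leq(\sqrt{21}-3)N+\frac{5+4/(3N)}{c_0}.
\]
Writing $N=n-1$ turns the right-hand side into $(\sqrt{21}-3)n-(\sqrt{21}-3)+(5+4/(3N))/c_0$. For $N\geq3$ the additive constant is below $8$, since roughly $-1.58+9.0<8$, which proves \eqref{eq:multi-linear-finite}. The $\limsup$ statement follows immediately, and the bound for $\Dmono$ follows from $\Dmono\leq\Mfour$.

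\textbf{Main obstacle.} The delicate point is the bookkeeping of the additive constant in Step 4, since the losses add up: the $4N^2$ term from the four extreme diagonal units, the $-N^2-4N/3$ from $\Gamma_{N,\eps}$, and the rounding in $N=n-1$. I would compute $c_0=\tfrac12(\sqrt{21}-3)(\sqrt{21}-4)+\ldots$ exactly, or just bound it below by $0.6$. If the constant still falls short for the smallest $n$, I would treat $4\leq n\leq$ (small value) separately using the trivial bound $|S|\leq2n-2$ of \cref{lem:capacity}. This works because $2n-2\leq(\sqrt{21}-3)n+8$ whenever $n\leq 23$.
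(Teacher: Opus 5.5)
Your proposal is correct and is essentially the paper's proof. You use the same combination of the uniform lower bound on $\Gamma_{N,\eps}$, the rearrangement inequality and the order-statistic lemma, and the same cubic with root $(5-\sqrt{21})N$. Your tangent-line (concavity) estimate is the paper's mean-value step with $f'(\rho)\geq f'(\rho_0)=(7-\sqrt{21})/4$, written without normalising by $N$, and it gives the identical constant $\tfrac{7(7+\sqrt{21})}{9}-(\sqrt{21}-3)=\tfrac{76-2\sqrt{21}}{9}<8$ for $N\geq3$. Two minor points: $c_0=(7-\sqrt{21})/4\approx0.6044$ rather than $0.605$, which does not affect the conclusion, and the small-$n$ fallback via $2n-2$ is never needed.
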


\begin{proof}
Write $|S|=2N-r$ and set $\rho=r/N$. Combining
\eqref{eq:multi-Gamma-lower}, \eqref{eq:multi-rearrangement}, and
\cref{lem:multi-orders} gives
\[
  \frac{N^3}{3}-N^2-\frac{4N}{3}
  \leq I_N(r)-\frac{r^3}{24}+4N^2.
\]
Since
\[
  I_N(r)-\frac{r^3}{24}=N^3f(\rho),
  \qquad
  f(\rho)=\rho-\frac{\rho^2}{2}+\frac{\rho^3}{24},
\]
we obtain, for $N\geq3$,
\begin{equation}\label{eq:multi-f-bound}
  f(\rho)\geq\frac13-\frac5N-\frac{4}{3N^2}
  \geq\frac13-\frac{49}{9N}.
\end{equation}
The smallest nonnegative solution of $f(\rho)=1/3$ is
\[
  \rho_0=5-\sqrt{21},
\]
because
\[
  24\left(f(\rho)-\frac13\right)
  =(\rho-2)(\rho^2-10\rho+4).
\]
Moreover, on $[0,\rho_0]$,
\[
  f'(\rho)\geq f'(\rho_0)=\frac{7-\sqrt{21}}4.
\]
If $\rho\geq\rho_0$, the required lower bound on $r$ already holds. If
$\rho<\rho_0$, the mean value theorem together with
\eqref{eq:multi-f-bound} gives
\[
  \rho_0-\rho\leq\frac{7(7+\sqrt{21})}{9N}.
\]
Hence
\[
  r\geq(5-\sqrt{21})N-\frac{7(7+\sqrt{21})}{9}.
\]
Using $|S|=2N-r$ and $N=n-1$, we conclude that
\begin{align*}
  |S|
  &\leq(\sqrt{21}-3)N+\frac{7(7+\sqrt{21})}{9}\\
  &=(\sqrt{21}-3)n+\frac{76-2\sqrt{21}}9\\
  &<(\sqrt{21}-3)n+8.
\end{align*}
\end{proof}

The additive constant in \cref{thm:multi-linear} is not optimised. It arises
from the deliberately coarse $4N^2$ allowance in
\eqref{eq:multi-Q-bound}. Replacing that estimate by exact order statistics
would improve the constant without changing the coefficient
$\sqrt{21}-3$.

The coefficient $\sqrt{21}-3$ is slightly larger than the continuum
coefficient $\alpha$. The comparison separates the roles of the two
arguments: the discrete second-moment method already forces a linear deficit,
while the dual certificate improves the coefficient. The two methods are
therefore complementary rather than redundant.

\section{Fractional relaxation and continuum-to-discrete transfer}
\label{sec:transference}

\subsection{The four-direction fractional packing problem}

Assign a nonnegative mass $z_p$ to each point $p\in\C_\eps$, impose total
mass at most two on every row, column, difference diagonal, and sum diagonal,
and maximise $\sum_p z_p$.  Denote the optimum by $\Lmono(n,\eps)$.  We do
not impose separate pointwise constraints $z_p\leq1$; omitting them enlarges
the feasible region and therefore preserves the validity of the resulting
upper bound.  The characteristic vector of every four-direction admissible
set is feasible, so
\begin{equation}\label{eq:relaxation-chain}
  \Dmono(n,\eps)\leq\Mfour(n,\eps)\leq\Lmono(n,\eps).
\end{equation}

The dual assigns nonnegative weights $h_y,v_x,\lambda_d,\mu_s$ to the four
line families, where $0\leq x,y\leq n-1$,
$-(n-1)\leq d\leq n-1$, and $0\leq s\leq2n-2$, and minimises
\begin{equation}\label{eq:generic-dual-objective}
  2\left(
    \sum_{y=0}^{n-1}h_y+\sum_{x=0}^{n-1}v_x
    +\sum_{d=-(n-1)}^{n-1}\lambda_d
    +\sum_{s=0}^{2n-2}\mu_s
  \right)
\end{equation}
subject to
\begin{equation}\label{eq:generic-dual-cover}
  h_y+v_x+\lambda_{x-y}+\mu_{x+y}\geq1
  \qquad((x,y)\in\C_\eps).
\end{equation}
Finite linear-programming duality identifies this minimum with
$\Lmono(n,\eps)$; weak duality alone is sufficient for all upper bounds used
below~\cite{schrijver-1986}.

\subsection{Odd symmetry reductions}

For odd $n=2m+1$, the full square symmetry preserves each colour class.  We
call $\C_0$ the fat class and $\C_1$ the thin class.  The next reduced duals
were derived in~\cite{prellberg-checkerboard-2026}; we include the short
orbit argument because the precise half-mesh shift is needed in the sampling
theorem.

\begin{proposition}[Odd reduced duals]\label{prop:odd-duals}
For every integer $m\geq1$, the fat-class value $\Lmono(2m+1,0)$ is the
minimum of
\begin{equation}\label{eq:fat-objective}
  \Phi_m^{\mathrm{fat}}(a,b)=
  8\sum_{i=0}^{m-1}(a_i+b_i)+4(a_m+b_m)
\end{equation}
over nonnegative $a_0,\ldots,a_m,b_0,\ldots,b_m$ satisfying
\begin{equation}\label{eq:fat-constraint}
  a_u+a_{m-v}+b_{u+v}+b_{u-v}\geq1
  \quad(0\leq v\leq u,\ u+v\leq m).
\end{equation}
For the thin class, $\Lmono(2m+1,1)$ is the minimum of
\begin{equation}\label{eq:thin-objective}
  \Phi_m^{\mathrm{thin}}(a,b)=
  8\sum_{i=0}^{m-1}a_i+8\sum_{i=0}^{m-1}b_i+4b_m
\end{equation}
over nonnegative $a_0,\ldots,a_{m-1},b_0,\ldots,b_m$ satisfying
\begin{equation}\label{eq:thin-constraint}
  a_u+a_{m-v-1}+b_{u+v+1}+b_{u-v}\geq1
  \quad(0\leq v\leq u,\ u+v\leq m-1).
\end{equation}
\end{proposition}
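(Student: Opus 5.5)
The plan is the standard symmetrisation argument for linear programmes, followed by an orbit count. By weak and strong duality for the finite programme \eqref{eq:generic-dual-objective}--\eqref{eq:generic-dual-cover}, $\Lmono(2m+1,\eps)$ is the minimum of the dual objective. The dihedral group $\mathcal D_4$ of the square acts on the four line families. Rows and columns are interchanged by the reflections in $x=y$ and $x+y=N$. Difference and sum diagonals are interchanged by the axis reflections. For odd $n$, every element preserves $\C_\eps$, as recorded in \cref{sec:setting}. Hence the cover constraints \eqref{eq:generic-dual-cover} are permuted among themselves, and the objective is invariant. Averaging any optimal dual solution over the group orbit gives, by convexity of the feasible region and linearity of the objective, an optimal dual solution that is constant on orbits of lines. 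It therefore suffices to minimise over symmetric duals. Conversely, a symmetric assignment built from reduced variables is a genuine dual solution with the same objective. This gives equality rather than a mere inequality.

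Next I will identify the orbits. I will use centred coordinates $p=x-m$, $q=y-m\in\{-m,\dots,m\}$. Rows and columns at offset $\pm k$ form one orbit, which I label $a_{m-k}$, so that $a_m$ is the central row and column. Diagonals $x-y=d$ and $x+y-2m=e$ with $|d|$ or $|e|$ equal to a common value form one orbit. In the fat class $d$ and $e$ are even, and I label the orbit with $|d|=2(m-i)$ by $b_i$. Counting lines with factor $2$ from the capacities then gives the objective. For $i<m$, each orbit has four lines ($\pm$ in two families), contributing $8a_i$ and $8b_i$. The central orbits have two lines, contributing $4a_m$ and $4b_m$. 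This gives \eqref{eq:fat-objective}. In the thin class, $d$ and $e$ are odd, with $|d|\in\{1,3,\dots,2m-1\}$. Every diagonal orbit then has four lines, but the row and column labelling shifts by a half-mesh. The central lines $p=0$ or $q=0$ pair with odd coordinates. After relabelling, one family carries $m$ variables with full weight $8$, and the other carries an extra central weight $4b_m$, matching \eqref{eq:thin-objective}. I would fix these labels by checking small $m$ explicitly against the stated objectives.

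The constraints come next. A point with offsets $(p,q)$ imposes
\[
  a_{m-|p|}+a_{m-|q|}+b_{\,m-|p-q|/2}+b_{\,m-|p+q|/2}\geq1
\]
in the fat case. Points in one orbit give identical constraints, so I may assume $0\leq q\leq p$. Substituting $u=(p+q)/2$ and $v=(p-q)/2$, or the reversed indexing, should put this into the form \eqref{eq:fat-constraint} with $0\leq v\leq u$ and $u+v\leq m$. The thin case is analogous with $p+q$ odd. The substitution $u+v+1$, $u-v$ then absorbs the half-unit shift and yields \eqref{eq:thin-constraint}.

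The main obstacle is this index bookkeeping. I need to choose the affine relabelling of orbit indices so that the four terms in each cover constraint land exactly on $a_u,a_{m-v},b_{u+v},b_{u-v}$, and on their thin analogues. I also need to check that the ranges of $(u,v)$ are exactly the orbit representatives. I would first try the labelling that sends the row offset $p$ to index $u$ and the diagonal offset $|p+q|/2$ to $u+v$. I would then adjust by the reflection $i\mapsto m-i$ in whichever family fails to match, verifying the result on $m=1,2$.
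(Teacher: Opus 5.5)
Your strategy is the paper's: average an optimal dual over the colour-preserving dihedral group, pass to orbit weights, and reduce the cover constraints to representatives. But the proposition \emph{is} the index bookkeeping you defer, and the one concrete choice you make fails in a way your proposed repair cannot fix.

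Your index maps agree with the paper's: $m-k$ for rows and columns at offset $\pm k$, and $m-|d|/2$ for fat diagonals. However, you attach the letters to the wrong families. In the statement, $a$ indexes the diagonal orbits and $b$ the row/column orbits.

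In the fat class this is harmless, because $\Phi_m^{\mathrm{fat}}$ is symmetric in $a$ and $b$. With your labels and $u=m-(p+q)/2$, $v=(p-q)/2$, your constraint reads $b_u+b_{m-v}+a_{u+v}+a_{u-v}\geq1$. This is \eqref{eq:fat-constraint} with the letters interchanged.

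In the thin class the swap is decisive. The central row and column meet $\C_1$, for instance at $(m\pm1,m)$. The central diagonals $x-y=0$ and $x+y=2m$ have even parameters and do not. So the unique two-line orbit, the one carrying coefficient~$4$, is the central row/column pair. This forces the row/column family to be $b$, with $b_m$ central. Your convention would instead produce $8\sum_{i<m}a_i+4a_m+8\sum_{i<m}b_i$, which is not \eqref{eq:thin-objective}.

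For the same reason, your remark that the row and column labels shift by half a mesh is backwards. Rows and columns keep their integer labels; the diagonal labels $|d|\in\{1,3,\dots,2m-1\}$ are the shifted ones. A reflection $i\mapsto m-i$ within one family changes neither its number of variables nor which family owns the central orbit, so it cannot repair the mismatch. Your first-try dictionary, sending the row offset to $u$ and the sum-diagonal offset to $u+v$, is reversed in the same way.

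The correct dictionary is $h_t=v_t=b_{\min\{t,2m-t\}}$, $\lambda_d=a_{m-(|d|+\eps)/2}$ and $\mu_s=a_{m-(|s-2m|+\eps)/2}$ on compatible labels. Reduce each point to the corner $0\leq Y\leq X\leq m$; in your centred coordinates, $X=m-q$ and $Y=m-p$.

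In the fat class, take $(X,Y)=(u+v,u-v)$. The column, row, difference diagonal and sum diagonal then contribute $b_{u+v}$, $b_{u-v}$, $a_{m-v}$ and $a_u$. In the thin class, $(X,Y)=(u+v+1,u-v)$ gives $b_{u+v+1}$, $b_{u-v}$, $a_{m-v-1}$ and $a_u$, and $X\leq m$ becomes $u+v\leq m-1$.

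The objective coefficients then come from orbit sizes four and two times the factor~$2$ in \eqref{eq:generic-dual-objective}. The rest of your argument goes through unchanged, including the converse that reduced variables lift to a feasible generic dual.
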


\begin{proof}
Use the standard averaging argument for symmetric linear
programmes~\cite[Proposition~13]{bodi-herr-joswig-2013}:
average a feasible generic dual over the colour-preserving dihedral
symmetries.  This preserves feasibility and objective value and makes the
weights constant on line orbits. For either colour $\eps$, define the orbit
weights by
\[
  h_t=v_t=b_{\min\{t,2m-t\}},\qquad
  \lambda_d=a_{m-(|d|+\eps)/2},\qquad
  \mu_s=a_{m-(|s-2m|+\eps)/2},
\]
where the diagonal formulas apply to compatible labels
$d,s\equiv\eps\pmod2$. Incompatible diagonals contain no point of the
selected colour and may be assigned weight zero.

Reflect a point into $[0,m]^2$ and interchange its coordinates if necessary
to obtain $0\leq Y\leq X\leq m$. In the fat class, putting
$u=(X+Y)/2$ and $v=(X-Y)/2$ gives representatives
$(X,Y)=(u+v,u-v)$ with $0\leq v\leq u$ and $u+v\leq m$. The four
incident orbit weights are $a_u,a_{m-v},b_{u+v},b_{u-v}$, giving
\eqref{eq:fat-constraint}.  Noncentral line orbits have four members and
central orbits two; the factor two in \eqref{eq:generic-dual-objective} gives
the coefficients eight and four in \eqref{eq:fat-objective}.

For the thin class, putting $u=(X+Y-1)/2$ and $v=(X-Y-1)/2$ gives
representatives $(X,Y)=(u+v+1,u-v)$ with $0\leq v\leq u$ and
$u+v\leq m-1$. The half-layer shift changes the four indices to those in
\eqref{eq:thin-constraint}.  There is no central $a$-orbit, while $b_m$
remains central, giving \eqref{eq:thin-objective}.  Conversely, assigning a
reduced variable to each line in its orbit recovers a feasible generic dual,
so the reduced and generic optima agree.
\end{proof}

\subsection{Sampling a continuum dual cover}

Let
\[
  T=\{(x,y):0\leq y\leq x,\ x+y\leq1\}.
\]
Suppose that nonnegative functions $A,B:[0,1]\to[0,\infty)$ satisfy the
continuum obstacle
\begin{equation}\label{eq:continuum-obstacle}
  A(x)+A(1-y)+B(x+y)+B(x-y)\geq1
  \qquad((x,y)\in T).
\end{equation}
The inequality is required on the closed triangle, including its boundary,
since boundary points occur in the sampling below.

The following elementary quadrature estimate is the equally spaced
one-dimensional case of Koksma's inequality; see, for example,
\cite[equation~(2)]{aistleitner-dick-2015}. We include a direct proof.

\begin{lemma}[Riemann sums for bounded-variation functions]
\label{lem:bv-riemann}
If $f$ has bounded variation on $[0,1]$, then, for every integer $m\geq1$,
\[
  \left|\sum_{i=0}^{m-1}f(i/m)-m\int_0^1f(t)\,dt\right|
  \leq\operatorname{Var}(f)
\]
and
\[
  \left|\sum_{i=0}^{m-1}f((i+1/2)/m)-m\int_0^1f(t)\,dt\right|
  \leq\operatorname{Var}(f),
\]
where $\operatorname{Var}(f)$ denotes the total variation on $[0,1]$.
\end{lemma}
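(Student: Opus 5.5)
Partition $[0,1)$ into the $m$ intervals $I_i=[i/m,(i+1)/m)$, $0\leq i\leq m-1$, so that $m\int_0^1f=\sum_i m\int_{I_i}f$, and compare each sample value with the average of $f$ over its own interval. Let $\xi_i$ be the sample point, either $i/m$ or $(i+1/2)/m$; in both cases $\xi_i\in\overline{I_i}$. Then
\[
  \sum_{i=0}^{m-1}f(\xi_i)-m\int_0^1f(t)\,dt
  =\sum_{i=0}^{m-1}m\int_{I_i}\bigl(f(\xi_i)-f(t)\bigr)\,dt .
\]
A function of bounded variation is bounded and has at most countably many discontinuities, so it is Riemann integrable and all these integrals make sense.

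The key estimate is local. For $t\in\overline{I_i}$ we have $|f(\xi_i)-f(t)|\leq\operatorname{Var}(f;\overline{I_i})$, the variation of $f$ on the closed interval $[i/m,(i+1)/m]$: the partition $\{\xi_i,t\}$ together with the endpoints is admissible in the definition of variation. The $i$-th term is therefore at most $\operatorname{Var}(f;\overline{I_i})$ in absolute value, since $m\int_{I_i}dt=1$.

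Summing over $i$ and using additivity of total variation over adjacent closed intervals, $\sum_i\operatorname{Var}(f;[i/m,(i+1)/m])=\operatorname{Var}(f)$, gives the bound $\operatorname{Var}(f)$ for both sampling rules simultaneously. The shared endpoints $(i+1)/m$ cause no double counting, because variation is additive: $\operatorname{Var}_{[a,c]}=\operatorname{Var}_{[a,b]}+\operatorname{Var}_{[b,c]}$.

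No step is a real obstacle. The only points needing care are that the left-endpoint sample $\xi_i=i/m$ lies in the closed interval, which it does, and that nothing is assumed about $f$ at $t=1$. The point $1$ is never sampled, and it affects neither the integral nor the variation bound, because $\operatorname{Var}(f)$ is taken over the whole closed interval $[0,1]$. If one prefers a sharper constant for the midpoint rule, one could split each interval at its midpoint, but the stated bound does not need this.
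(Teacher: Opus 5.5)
Your proof is correct and follows essentially the same argument as the paper. Both compare each sample with the average of $f$ over its own subinterval $[i/m,(i+1)/m]$, bound the difference by the variation on that subinterval, and sum using additivity of total variation; your remarks on Riemann integrability and on the endpoint $t=1$ only make explicit details the paper leaves implicit.
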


\begin{proof}
On an interval $I_i=[i/m,(i+1)/m]$, the difference between any sampled value
of $f$ and the average value of $f$ on $I_i$ is at most the variation of $f$
on $I_i$.  Multiplying by the interval length and summing shows that the
normalised Riemann sum differs from the integral by at most
$\operatorname{Var}(f)/m$.  Multiplication by $m$ gives both statements.
\end{proof}

\begin{theorem}[Continuum-to-discrete transference]\label{thm:transference}
Suppose nonnegative bounded-variation functions $A,B$ satisfy
\eqref{eq:continuum-obstacle}, and put
\[
  \beta=4\int_0^1(A(t)+B(t))\,dt.
\]
Then, for every $n\geq2$ and each $\eps\in\{0,1\}$,
\[
  \Lmono(n,\eps)\leq\beta n+C_{A,B},
  \qquad
  C_{A,B}=8\bigl(\operatorname{Var}A+\operatorname{Var}B\bigr)
           +4\bigl(A(1)+B(1)\bigr).
\]
Consequently,
\[
  \Mfour(n,\eps)\leq\beta n+O(1),
  \qquad
  \Dmono(n,\eps)\leq\beta n+O(1),
\]
and the corresponding normalised upper limits are at most $\beta$.
The implied constants may depend on the fixed profiles $A,B$, but are
independent of $n$ and may be chosen uniformly in $\eps$.
\end{theorem}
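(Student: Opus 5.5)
We sample the continuum profiles on the reduced index sets of \cref{prop:odd-duals} and apply weak duality: any feasible point of a reduced dual bounds $\Lmono$ from above. The even-$n$ case and the thin class will be handled by the same sampling with a shifted mesh, or by monotonicity.

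\emph{Fat class, $n=2m+1$.} Put $a_i=A(i/m)$ and $b_i=B(i/m)$ for $0\le i\le m$. For a constraint index $(u,v)$ with $0\le v\le u$ and $u+v\le m$, the point $(x,y)=(u/m,v/m)$ lies in $T$. Then \eqref{eq:continuum-obstacle} at this point reads $a_u+a_{m-v}+b_{u+v}+b_{u-v}\ge1$, which is exactly \eqref{eq:fat-constraint}. Hence $\Lmono(2m+1,0)\le\Phi_m^{\mathrm{fat}}(a,b)=8\sum_{i<m}(A(i/m)+B(i/m))+4(A(1)+B(1))$. By \cref{lem:bv-riemann}, this is at most $8m\int_0^1(A+B)+8(\operatorname{Var}A+\operatorname{Var}B)+4(A(1)+B(1))$. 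Since $8m=4(n-1)\le4n$ and the integral is nonnegative, we get $\Lmono(n,0)\le\beta n+C_{A,B}$.

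\emph{Thin class, $n=2m+1$.} The thin constraint \eqref{eq:thin-constraint} involves $a_u$ and $a_{m-v-1}$, together with $b_{u+v+1}$ and $b_{u-v}$. This half-layer shift suggests sampling $a$ at midpoints and $b$ at the mesh: $a_i=A((i+1/2)/m)$ and $b_i=B(i/m)$. Take the point $x=(u+1/2)/m$, $y=(v+1/2)/m$. Then $1-y=(m-v-1+1/2)/m$, $x+y=(u+v+1)/m$, and $x-y=(u-v)/m$. Moreover $0\le y\le x$ and $x+y\le m/m=1$, so the point lies in $T$, and \eqref{eq:continuum-obstacle} gives \eqref{eq:thin-constraint} exactly. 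The thin objective is $8\sum_{i<m}A((i+1/2)/m)+8\sum_{i<m}B(i/m)+4B(1)$. The midpoint and left-endpoint cases of \cref{lem:bv-riemann} bound this by $\beta n+C_{A,B}$ in the same way.

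\emph{Even $n$.} This is the step I expect to be the main obstacle, since no reduced dual was stated for even $n$. The plan is to avoid it by monotonicity. The grid $\G_n$ embeds in $\G_{n+1}$ by the identity $(x,y)\mapsto(x,y)$, and this embedding preserves colour. Each row, column or diagonal of the smaller grid is contained in a line of the larger grid, so a feasible primal mass vector for $(n,\eps)$ extends by zero to a feasible one for $(n+1,\eps)$. Hence $\Lmono(n,\eps)\le\Lmono(n+1,\eps)\le\beta(n+1)+C_{A,B}$, where $n+1$ is odd and the odd-case bound applies to both colours.

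This loses an additive $\beta$, so it does not give the exact constant $C_{A,B}$. To recover that constant I would first try to sample the generic dual directly for even $n$ with a half-mesh scaling. The orbit formulas are $h_t=v_t=B(\cdot)$ with argument $\min\{t,n-1-t\}/((n-1)/2)$, and similarly for $\lambda$ and $\mu$. I would then check the cover inequality via \eqref{eq:continuum-obstacle} after reflecting the point into the fundamental triangle, since the closed triangle includes the needed boundary cases. Bounding the resulting sums, where each noncentral orbit counted four times, would again use \cref{lem:bv-riemann}. If that bookkeeping fails, the monotonicity argument still yields the $O(1)$ statements.

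The final consequences follow from \eqref{eq:relaxation-chain}.
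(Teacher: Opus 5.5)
Your sampling in both odd classes, your feasibility checks via \eqref{eq:continuum-obstacle}, and your zero-extension monotonicity $\Lmono(n,\eps)\le\Lmono(n+1,\eps)$ are exactly the paper's argument. They already prove the $O(1)$ consequences. The gap is the explicit constant for even $n$, which you leave open. As written you obtain only $\Lmono(n,\eps)\le\beta n+\beta+C_{A,B}$, and then propose an unverified direct sampling of the generic dual.

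The missing observation is already in your own computation. In the fat class your bound is $8m\int_0^1(A+B)+8(\operatorname{Var}A+\operatorname{Var}B)+4(A(1)+B(1))$. Here $8m\int_0^1(A+B)=2m\beta=\beta(n-1)$ exactly, and by weakening this to $\beta n$ you discarded precisely the slack you later needed. The same holds in the thin class, where $4B(1)\le4(A(1)+B(1))$ because $A\ge0$. Hence for every odd $n'=2m+1$ and either colour,
\[
  \Lmono(n',\eps)\le\beta(n'-1)+C_{A,B}.
\]
With this sharper odd-grid bound, monotonicity costs nothing. For even $n$ it gives $\Lmono(n,\eps)\le\Lmono(n+1,\eps)\le\beta n+C_{A,B}$, and for odd $n$ you pass from $\beta(n-1)$ to $\beta n$ using $\beta\ge0$. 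This is exactly how the paper finishes, so the even-grid direct sampling you were contemplating is unnecessary. It would also be awkward: for even $n$, four of the eight square symmetries interchange the colours, so the orbit reduction of \cref{prop:odd-duals} is not available in that form.
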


\begin{proof}
For the fat class, set
\[
  a_i=A(i/m),\qquad b_i=B(i/m)\qquad(0\leq i\leq m).
\]
For each constraint \eqref{eq:fat-constraint}, take $x=u/m$ and $y=v/m$.
The index conditions place $(x,y)$ in $T$, and the constraint left side is
exactly
\[
  A(x)+A(1-y)+B(x+y)+B(x-y),
\]
so the sampled variables are feasible.  By \cref{lem:bv-riemann}, their
objective is
\[
\begin{split}
  8\sum_{i=0}^{m-1}(A+B)(i/m)+4(A(1)+B(1))
  &=8m\int_0^1(A+B)+O(1)\\
  &=2m\beta+O(1)\\
  &=\beta(2m+1)+O(1).
\end{split}
\]

For the thin class, use the shifted samples
\[
  a_i=A((i+1/2)/m)\quad(0\leq i<m),
  \qquad
  b_i=B(i/m)\quad(0\leq i\leq m).
\]
For a constraint \eqref{eq:thin-constraint}, set
\[
  x=\frac{u+1/2}{m},
  \qquad
  y=\frac{v+1/2}{m}.
\]
Then $0\leq y\leq x$ and $x+y\leq1$, while the left side becomes exactly
\[
  A(x)+A(1-y)+B(x+y)+B(x-y).
\]
Thus the samples are feasible, and the midpoint and left Riemann sums give
\[
\begin{split}
  &8\sum_{i=0}^{m-1}A((i+1/2)/m)
  +8\sum_{i=0}^{m-1}B(i/m)+4B(1)\\
  &\hspace{40mm}=2m\beta+O(1)
  =\beta(2m+1)+O(1).
\end{split}
\]
This proves the two odd-grid estimates.

Finally, extend any feasible primal mass assignment on $\G_n$ by zero on the
new row and column of $\G_{n+1}$.  Every old line sum is unchanged and every
new-only sum is zero, so
\[
  \Lmono(n,\eps)\leq\Lmono(n+1,\eps).
\]
The variation bounds in \cref{lem:bv-riemann} show that
both odd-grid objectives are at most $\beta(n-1)+C_{A,B}$. Since
$\beta\geq0$, this is at most $\beta n+C_{A,B}$; for even $n$, comparison
with $n+1$ gives the same bound. Thus the stated constant works for every
$n\geq2$ and both colours.
The bounds for $\Mfour$ and $\Dmono$ now follow from
\eqref{eq:relaxation-chain}.
\end{proof}

\subsection{Application of the earlier continuum bound}\label{sec:alpha-application}

The general result above applies to any profiles satisfying its hypotheses.
For the numerical coefficient $\alpha$, we use the following input from
\cite[Theorem~1 and Section~4.2]{prellberg-checkerboard-2026}: there exist
continuous nonnegative piecewise-polynomial functions $A,B$ satisfying
\eqref{eq:continuum-obstacle} and
\[
  4\int_0^1(A+B)=\alpha,
\]
where $\alpha$ is the middle real root of
$401\alpha^3-1744\alpha^2+2240\alpha-768$.  Since these functions are
piecewise polynomial, they have bounded variation. These properties are
the only features of the earlier construction needed here. A separate
exact verification of the profiles is provided in the ancillary files.

\begin{corollary}\label{cor:alpha}
There is an absolute constant $C$ such that, for every $n\geq2$ and
$\eps\in\{0,1\}$,
\[
  \Dmono(n,\eps)\leq\Mfour(n,\eps)
  \leq\Lmono(n,\eps)\leq\alpha n+C.
\]
In particular,
\[
  \limsup_{n\to\infty}\frac{\Lmono(n,\eps)}n
  \leq\alpha,
  \qquad
  \limsup_{n\to\infty}\frac{\Mfour(n,\eps)}n
  \leq\alpha.
\]
\end{corollary}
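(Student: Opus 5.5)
The plan is to apply \cref{thm:transference} directly to the continuum profiles quoted from \cite[Theorem~1 and Section~4.2]{prellberg-checkerboard-2026}. Concretely, take the continuous nonnegative piecewise-polynomial functions $A,B$ described just before the corollary. They satisfy the closed-triangle obstacle \eqref{eq:continuum-obstacle} and have $4\int_0^1(A+B)=\alpha$. First I would check the two hypotheses of the transference theorem. Nonnegativity and the obstacle inequality on all of $T$, including the boundary, are part of the imported input; continuity matters here, since it guarantees that the inequality holds on the closed triangle and not merely almost everywhere. Bounded variation follows because each polynomial piece is monotone on finitely many subintervals, so $\operatorname{Var}A$ and $\operatorname{Var}B$ are finite. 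The theorem then yields, for every $n\geq2$ and both colours,
\[
  \Lmono(n,\eps)\leq\beta n+C_{A,B},\qquad \beta=\alpha,
\]
where $C_{A,B}=8(\operatorname{Var}A+\operatorname{Var}B)+4(A(1)+B(1))$ is finite and depends only on the fixed profiles.

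Next I would set $C=C_{A,B}$. Since the profiles are a single fixed pair, independent of $n$ and $\eps$, this $C$ is an absolute constant. Combining the bound with the relaxation chain \eqref{eq:relaxation-chain} gives the displayed chain $\Dmono\leq\Mfour\leq\Lmono\leq\alpha n+C$. Dividing by $n$ and letting $n\to\infty$ then gives both $\limsup$ statements, because $C/n\to0$.

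The only real obstacle is confirming that the imported profiles meet the hypotheses exactly as stated. In particular, the obstacle must hold at boundary points such as $y=x$ and $x+y=1$, which the thin-class midpoint sampling reaches, and the profiles must be genuinely nonnegative on the whole of $[0,1]$. I would rely on the exact verification recorded in the ancillary files and in \cite{prellberg-checkerboard-2026} for these facts. The bounds $1.576823396873<\alpha<1.576823396874$ quoted in \cref{thm:asymptotic-intro} follow separately, by checking sign changes of the cubic $401\alpha^3-1744\alpha^2+2240\alpha-768$ at these two rational endpoints.
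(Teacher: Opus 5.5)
Your proposal is correct and follows essentially the same route as the paper: apply \cref{thm:transference} to the profiles imported from \cite{prellberg-checkerboard-2026}, take $C=C_{A,B}$, combine with \eqref{eq:relaxation-chain}, and obtain the $\limsup$ statements because $C/n\to0$. Your remarks on bounded variation of piecewise-polynomial profiles and on the obstacle holding on the boundary of $T$ make explicit what the paper records in \cref{sec:alpha-application}.
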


\begin{proof}
Apply \cref{thm:transference} to the profiles supplied by
\cite[Theorem~1]{prellberg-checkerboard-2026}, taking $C=C_{A,B}$.
The chain of inequalities is
\eqref{eq:relaxation-chain}.
\end{proof}

\section{Concluding remarks}\label{sec:limitations}

The three bounds expose different levels of the four-direction structure.
The $2n-4$ theorem is a moment obstruction to near saturation.  The
multiple-deficit argument remains entirely discrete and already forces a
deficit linear in $n$.  The continuum sampling theorem gives the smaller
coefficient $\alpha$, but its numerical value comes from the exact feasible
certificate of~\cite{prellberg-checkerboard-2026}; neither that certificate's
optimality nor convergence of the normalised finite LP optima is known.
There is also no matching construction of no-three-in-line sets of size
$\alpha n-o(n)$.

Several directions remain open.  Exact order statistics would improve the
additive constant in \cref{thm:multi-linear}, though not its coefficient.
More substantially, one may seek an optimal continuum primal--dual pair or
incorporate further slope families, which may provide additional moment
dependencies.
The discrete-tomography viewpoint in \cref{rem:tomography} suggests a
systematic language for this extension; explicit constructions of global
dependencies for general direction sets are given
in~\cite{hajdu-tijdeman-2017}.

\subsection*{Acknowledgements}
We are grateful for the interest in the earlier checkerboard paper and for
the care with which readers explored its open questions and shared their
ideas. We thank Baris Zurnaci, Jialin Chen and Cemal Payzin for communicating
their approaches to the proposed finite bound, and Grisha Pochuev and Luan
Oliveira for sharing their computational investigations. This exchange,
including the correspondence that led to the present collaboration, has
been an encouraging part of the development of this work.

\subsection*{Author contributions}

Sandhu developed the one-deficit moment argument proving the $2n-4$ bound and
the continuum-to-finite sampling argument.  Aujla helped check the
one-deficit proof and suggested the separate treatment of the exceptional
case $n=6$.  Prellberg developed the entire multiple-deficit argument,
including the fixed-deficit bounds and the $\sqrt{21}-3$ estimate.  The exact continuum
certificate used in \cref{cor:alpha} is from Prellberg's earlier
work~\cite{prellberg-checkerboard-2026}.  The authors jointly checked the
results and prepared the manuscript.

\subsection*{Declaration of generative AI and AI-assisted technologies}

During the development of this work, generative AI tools, including OpenAI
ChatGPT, were used as exploratory and drafting aids.  They assisted in
comparing possible arguments, stress-testing proof ideas, checking algebraic
manipulations and intermediate calculations, and organising exposition and
LaTeX.  The overall research direction and mathematical substance were
determined by the authors.  AI-generated suggestions were not treated as
mathematical evidence or authority, and every proof step, computation,
reference, and final passage was independently reviewed by an author.  The
authors take responsibility for the content of the manuscript.

\end{document}